\definecolor{myblue}{rgb}{0.00000,0.44700,0.74100}%
\definecolor{myred}{rgb}{0.95000,0.200,0.200}%
\definecolor{mycolor3}{rgb}{0.92900,0.69400,0.12500}%
\definecolor{mygreen}{rgb}{0.1,0.7,0.15}%
\newtheorem{assumption}{Assumption}
\newcommand{\R}{\mathbb{R}}
\newcommand{\N}{\mathbb{N}}
\newcommand{\I}{\mathbb{I}}
\newcommand{\X}{\mathbb{X}}
\newcommand{\U}{\mathbb{U}}
\newcommand{\Kinf}{\mathcal{K}_\infty}
\renewcommand{\L}{\mathcal{L}}
\newcommand{\Pistar}{{\Pi^{\star}}}
\newcommand{\PiXstar}{{\Pi^{\star}_{\X}}}
\newcommand{\betakN}[2][N]{{\textstyle \beta \left(\frac{#2}{#1}\right)}}
\newcommand{\refeq}[2]{\overset{\makebox[0pt][c]{\scriptsize #1}}{#2}}
\newcommand{\uNx}[1][x]{u_{#1}^\star} 
\newcommand{\xNx}[1][x]{x_{#1}^\star} 
\newcommand{\uNxrot}[1][x]{\tilde u_{#1}^\star}
\newcommand{\xNxrot}[1][x]{\tilde x_{#1}^\star}
\begin{document}
\mainmatter              

\title{On discount functions for economic model predictive control without terminal conditions}

\titlerunning{Discounted economic MPC}  

\author{Lukas Schwenkel\inst{1}\footnote[1]{L. Schwenkel thanks the International Max Planck Research School for Intelligent Systems (IMPRS-IS) for supporting him.}
	\and Daniel Briem\inst{1} \and Matthias A. M\"uller\inst{2} \and Frank Allg\"ower\inst{1}}

\authorrunning{L. Schwenkel, D. Briem, M. Müller, F. Allgöwer} 

\tocauthor{Lukas Schwenkel, Daniel Briem, Matthias M\"uller, Frank Allg\"ower}

\institute{University of Stuttgart, Institute for Systems Theory and Automatic Control, 70569 Stuttgart, Germany.
	\email{\{schwenkel, allgower\}@ist.uni-stuttgart.de, daniel.briem@outlook.de}
	\and Leibniz University Hannover, Institute of Automatic Control, 30167 Hannover, Germany.
	\email{mueller@irt.uni-hannover.de}
}

\maketitle

\begin{abstract}
	In this paper, we investigate discounted economic model predictive control (E-MPC) schemes without terminal conditions in scenarios where the optimal operating behavior is a periodic orbit.
	For such a setting, it is known that a linearly discounted stage cost guarantees asymptotic stability of any arbitrarily small neighborhood of the optimal orbit if the prediction horizon is sufficiently long.
	However, in some examples very long prediction horizons are needed to achieve the desired performance.
	In this work, we extend these results by providing the same qualitative stability guarantees for a large class of discount functions.
	Numerical examples illustrate the influence of the discount function and show that with suitable discounting we can achieve significantly better performance than the linearly discounted E-MPC, even for short prediction horizons.
	\keywords{Economic model predictive control, periodic optimal control, practical asymptotic stability.
	}
\end{abstract}

\section{Introduction}
\label{s:intro}
Economic model predictive control (E-MPC) has gained popularity due to its ability to directly optimize a certain performance criterion while satisfying constraints for nonlinear systems \cite{11-ellis14}, \cite{25-faulwasser18}, \cite{18-gruene17}, \cite{24-koehler24}.
In E-MPC, the cost to be minimized by solving an optimal control problem could represent, e.g., the production capacity or energy demand in process control \cite{11-ellis14}.
The optimal operating behavior in such a setting is not pre-specified as in tracking MPC, but implicitly specified by the interplay of the system, the stage cost, and the constraints.
Thus, the optimal operation can result in a steady state (e.g., \cite{15-amrit11}, \cite{8-angeli11}, \cite{6-gruene13}, \cite{5-gruene14}), a periodic orbit(e.g., \cite{2-koehler18},~\cite{3-mueller16},~\cite{14-zanon16},~\cite{1-schwenkel23}), or an even more general trajectory (e.g.,~\cite{Dong2018},~\cite{22-martin19}).
To give guarantees on stability and performance, terminal conditions are used in \cite{15-amrit11} for steady state and in \cite{14-zanon16} for periodic optimal operation.
In \cite{14-zanon16}, however, the stability is only proven for one specific phase on the orbit.
Moreover, implementing terminal conditions requires significant design effort including knowledge of the optimal behavior, which is why terminal conditions are often avoided in practice. 
For E-MPC without terminal conditions in scenarios where the optimal operating behavior is a steady state, \cite{6-gruene13} and \cite{5-gruene14} provide practical asymptotic stability under usual controllability and dissipativity assumptions.
However, if the optimal operating behavior is a periodic orbit and the cost is not constant on the orbit \cite{2-koehler18}, this MPC scheme could fail to approach optimal average performance even for an arbitrarily long prediction horizon as seen in examples in \cite[Ex.~6]{1-schwenkel23} or \cite[Ex.~4, Ex.~5]{3-mueller16}.
To overcome this problem, the authors of \cite{3-mueller16} propose a $p^\star$-step E-MPC scheme, where $p^\star$ is the period length of the optimal orbit.
However, this setup requires system- and cost-specific knowledge of the orbit length and can only provide asymptotic convergence but no stability guarantees.
As a remedy, \cite{1-schwenkel23} introduced a linearly discounted stage cost function and provided guarantees on asymptotic average performance and practical asymptotic stability.
This means that any arbitrarily small neighborhood of the optimal orbit can be asymptotically stabilized if the prediction horizon length is long enough.
However, for some examples, extremely long prediction horizons may be necessary to achieve the desired performance~\cite[Ex.~19]{1-schwenkel23}.

In this work, we investigate for which discount functions we can get the same qualitative guarantees as in \cite{1-schwenkel23} and show in numerical examples that even for short prediction horizons the performance can be improved significantly.
In particular, we prove stability for a large class of discount functions, thereby enabling the control engineer to use the discount function as a tuning variable.
Further, it follows from this result that the asymptotic average performance is optimal up to an error that vanishes with growing prediction horizons.
Our numerical examples show the performance of various discounts and we observe in these examples that the linear discount from \cite{1-schwenkel23} can be significantly outperformed if, for example, only the second half of the horizon is linearly discounted.
We emphasize that this work aims to solve the undiscounted infinite-horizon optimal control problem. This differs from related works on exponentially discounted economic MPC (e.g.,~\cite{Gaitsgory2018},~\cite{Zanon22}, and references therein), which focus on solving the exponentially discounted infinite-horizon optimal control problem.

\emph{Outline.} Section~\ref{s:pr-set} introduces the problem setup and the discounted E-MPC scheme. Section~\ref{s:pas} contains the main results, while the technical parts of the proof are moved to the Appendix~\ref{app:lemmas}.
The numerical examples are presented in Section~\ref{s:NumEx} and our conclusion are drawn in Section~\ref{s:cao}.

\emph{Notation.}
In this work, $\mathbb{N}$, $\R$, and $\mathbb{R}_0^+$ denote the sets of naturals, reals, and non-negative reals.
Integers in the interval $[a,\,b]$ are denoted by $\mathbb{I}_{[a,b]}$ for $a\leq b$.
The modulo operator $[k]_p$ returns the remainder when dividing $k\in\mathbb{N}$ by the natural $p\geq 1$.
The floor-operator $\lfloor x \rfloor$ returns the largest integer that is smaller than or equal to $x\in\mathbb{R}$ and respectively $\lceil x \rceil$ as the ceiling-operator denotes the smallest integer that is larger than or equal to $x$.
The function class $\mathcal{K}$ contains functions $\alpha : \mathbb{R}_0^+ \to \mathbb{R}_0^+$, which are continuous and strictly increasing with $\alpha(0) = 0$. It is $\alpha \in \mathcal{K}_\infty$ if $\alpha$ is additionally unbounded. We denote $\delta \in \mathcal{L}$ if $\delta : \mathbb{N} \to \mathbb{R}_0^+$ is non-increasing and $\lim_{k\to\infty} \delta(k) = 0$. 
Further, $\gamma \in \mathcal{KL}$ denotes functions $\gamma : \mathbb{R}_0^+ \times \mathbb{N} \to \mathbb{R}_0^+$ with the property $\gamma(\cdot,\,t) \in \mathcal{K}$ and $\gamma(r,\,\cdot) \in \mathcal{L}$.

\section{Problem setup}
\label{s:pr-set}
We consider a nonlinear discrete-time system
\begin{equation}
x(k+1) = f\big(x(k),u(k)\big) , \qquad x(0) = x_0
\label{eq:sys}
\end{equation}
with state and input constraints $x(k) \in \mathbb{X} \subseteq \mathbb{R}^n$ and $u(k) \in \mathbb{U} \subseteq \mathbb{R}^m$.
The solution of \eqref{eq:sys} at time step $k$ resulting from an input sequence $u \in \mathbb{U}^K$ of length $K\in\mathbb{N}$ and initial condition $x_0 \in \mathbb{X}$ is denoted by $x_u(k,x_0)$.
The input sequence $u\in\U^K$ is feasibly, denoted by $u\in\U^K(x_0)$, if $x_u(k,x_0) \in \mathbb{X}$ for all $k\in\I_{[0,K]}$.
We aim to operate the system optimally concerning a (not necessarily positive definite) stage cost $\ell: \mathbb{X} \times \mathbb{U} \to \mathbb{R}$, in particular, we want to minimize the asymptotic average performance
\begin{equation}
J_\infty^\mathrm{av} (x,u) := \limsup_{T \to \infty} \frac{1}{T} J_T(x,u)\ \ \text{ with } \ \ 
J_T(x,u) := \sum_{k=0}^{T-1} \ell\big(x_u(k,x), u(k)\big)
\label{eq:J_av}
\end{equation}
and to stabilize the optimal operating behavior. We assume that the optimal operating behavior is a periodic orbit, which is reachable and controllable.
\begin{definition}[Periodic orbit]
	Let $\Pi_\X:\I_{[0,p-1]}\to \X$ and $\Pi_\U:\I_{[0,p-1]}\to \U$ for $p\in\N$. Then $\Pi= (\Pi_\X,\Pi_\U)$ is called a \emph{$p$-periodic orbit} of system~\eqref{eq:sys}, if 
	\begin{align}
		\Pi_\X \big([k+1]_p\big) = f\big(\Pi_\X (k), \Pi_\U(k) \big) = f\big(\Pi(k)\big)
	\end{align}
	for all $k \in \I_{[0,p-1]}$. 
	A $p$-periodic orbit $\Pi$ is called minimal, if $\Pi_\X$ is injective.
	The distance of a point $(x,u)\in\X\times \U$ to the image of the function $\Pi$ is defined by $\|(x,u)\|_\Pi := \min_{k\in \I_{[0,p-1]}} \|(x,u)-\Pi(k)\|$.
\end{definition}

\begin{assumption}[Optimal periodic orbit]\label{ass:orbit}
	There exists a minimal $p^\star$-periodic orbit $\Pistar$ that satisfies 
	for any $p\in\N$ and any $p$-periodic orbit $\Pi$ that
	\begin{align}\label{eq:ellstar}
		\ell^\star:=\frac 1 {p^\star} \sum_{k=0}^{{p^\star}-1} \ell (\Pi^\star (k)) \leq \frac 1 p \sum_{k=0}^{p-1} \ell (\Pi(k)).
	\end{align}
\end{assumption}
The following assumptions characterize the problem class for which we can provide performance and stability guarantees. These assumptions are standard in the context of periodic E-MPC (cf.~\cite{2-koehler18}, \cite{3-mueller16},~\cite{14-zanon16},~\cite{1-schwenkel23}).
\begin{assumption}[Strict dissipativity]\label{ass:s-diss}
There exist $\underline{\alpha}_{\tilde{\ell}} \in \mathcal{K}_\infty$ and a continuous storage function $\lambda : \mathbb{X} \to \mathbb{R}$ such that for all $x \in \mathbb{X}$ and $u \in \mathbb{U}^1(x)$ we have
\begin{equation}
\tilde{\ell}(x,u) := \ell(x,u) - \ell^\star + \lambda(x) - \lambda\big(f(x,u)\big) \geq \underline{\alpha}_{\tilde{\ell}}(\lVert(x,u)\rVert_{\Pi^\star}).
\label{eq:s-diss}
\end{equation}
We call $\ell(x,u) - \ell^\star$ the supply rate and $\tilde \ell$ the rotated stage cost.
\end{assumption}
\begin{assumption}[Continuity and compactness]
The functions $f$ and $\ell$ are continuous, and the constraint set $\mathbb{X}\times\mathbb{U}$ is compact. Hence, $\bar{\lambda}:= \sup_{x\in\X} |\lambda(x)|$ and $\bar \ell:= \sup_{(x,u)\in \mathbb{X}\times\mathbb{U}} |\ell(x,u)|$  are finite. 
\label{ass:cac}
\end{assumption}

\begin{assumption}[Local controllability at $\PiXstar$]
	\label{ass:lctrb}
There exists $\kappa > 0, \ M_1 \in \mathbb{N}, \ \alpha_c \in \mathcal{K}_\infty$ such that for all $z \in \Pi_\mathbb{X}^\star$ and $x,\, y \in \mathbb{X}$ with $\lVert x-z \rVert \leq \kappa$ and $\lVert y-z \rVert \leq \kappa$ a control input sequence $u \in \mathbb{U}^{M_1}(x)$ exists with $x_u(M_1,x) = y$ and
\begin{equation}
\big\lVert\big(x_u(k,x),u(k)\big)\big\rVert_{\Pi^\star} \leq \alpha_c\big(\mathrm{max} \big\{\lVert x \rVert_{\Pi_\mathbb{X}^\star}, \lVert y \rVert_{\Pi_\mathbb{X}^\star}\big\}\big), \quad \forall k \in \mathbb{I}_{[0,\,M_1-1]}.
\label{eq:A-lC}
\end{equation}
\end{assumption}

\begin{assumption}[Finite-time reachability of $\Pi^\star$]
For $\kappa > 0$ from Assumption~\ref{ass:lctrb}, $\exists M_2 \in \mathbb{I}_{\geq 1}$ such that $\forall x \in \mathbb{X}$ there exist $M\in \mathbb{I}_{[0,\,M_2]}$ and an input sequence $u \in \mathbb{U}^M(x)$ such that $\lVert x_u(M,\,x) \rVert_{\Pi_{\mathbb{X}}^\star} \leq \kappa$.
\label{ass:ftr}
\end{assumption}
\begin{remark}\label{rem:ass}
	We assume finite-time reachability of $\Pistar$ for all $x\in\X$ for simplicity of our proofs, similar to~\cite{3-mueller16}. 
	As detailed in~\cite[Section~4]{1-schwenkel23} for the linearly discounted E-MPC scheme, this assumption can be relaxed to hold only on a subset of $\X$.
	A similar relaxation is also possible in the setting with general discounts, which is, however, beyond the scope of the present paper.
\end{remark}

\subsection{Discounted Economic MPC}
\label{ss:dmpcs}
In this section, we describe the discounted economic MPC scheme without terminal conditions.
Given the current measurement $x=x(t)$, we solve at every time-step $t$ the optimal control problem
\begin{align}
	V_N^\beta(x) =\min_{u\in\U^N(x)}
	J_N^\beta(x,u) = \min_{u\in\U^N(x)} \sum_{k=0}^{N-1} \betakN{k} \ell\big(x_u(k,x),u(k)\big).
	\label{eq:VNbeta}
\end{align} 
Assumption~\ref{ass:cac} guarantees that for all $x\in\X$ and all $N\in\N$ a (possibly non-unique) optimal control sequence $\uNx$ exists, i.e, $V_N^\beta(x)=J_N^\beta(x,\uNx)$.
For convenience of notation, we will also use $\xNx(k):= x_{\uNx}(k,x)$.
Although this lucid notation hides it, both $\uNx$ and $\xNx$ depend on the prediction horizon length $N$ and the chosen discount function $\beta$.
The MPC feedback we apply is $\mu_{N}^\beta(x)=\uNx (0)$.
The discount function $\beta:[0,1]\to[0,1]$ defines a uniform shape of the discount for all horizon lengths $N$.
This is crucial as the following example shows.

\begin{example}[A non-uniform discount]
\label{ex:nclc}
Let us consider the example $x(t+1)=u(t)$ with only three feasible states $\X=\{-1,0,1\}$ and four feasible transitions $\{(-1,-1), (-1,0), (0,1), (1,0)\}\subseteq \X\times \U $.
Thus, the only possible decision to make is at $x=-1$ whether to go to $x=0$ and alternate at the $2$-periodic orbit between $x=0$ and $x=1$ or whether to stay at $x=-1$. 
The cost values are $\ell(-1,-1)=1$, $\ell(-1,0)=1$, $\ell(0,1)=0$, $\ell(1,0)=1.5$ and hence the system is optimally operated at the periodic orbit with average cost $\ell^\star = 0.75$, which is better than staying at $x=-1$ for cost $1$.
However, when considering optimal trajectories of odd length $N$, it is better to wait one step at $x=-1$ for cost $1$ and then approach the orbit rather than approaching it directly and ending with the step of cost $1.5$ on the orbit. 
It was discovered in~\cite[Example~4]{3-mueller16} that this phenomenon causes an undiscounted E-MPC scheme with odd prediction horizon length to wait at $x=-1$ forever.
In~\cite[Example~6]{1-schwenkel23} it was shown that linear discounts $\beta^\text{lin} (\frac{k}{N})=1-\frac{k}{N}$ solve this problem and that a linearly discounted E-MPC goes directly to the optimal orbit for all prediction horizons $N\geq 2$.
Let us consider a slight modification of this linear discount $\beta_N(k-1)=\beta_N(k)=\beta^\text{lin}(\frac{k}{N})$ for all $k\in 2\N$.
We introduced the notation $\beta_N(k)$ as this discount is not uniform, i.e., there exists no function $\beta:[0,1]\to[0,1]$ such that $\beta_N(k) = \betakN{k}$ for all $N\in\N$ and $k\in\I_{[0,N-1]}$.
Due to the special shape of the discount, when comparing the strategy of waiting one step with the one of directly going to the orbit, we need to compare only the cost for $k\in\{0,1,2\}$ as for $k\geq 3$ both strategies are on the optimal orbit and contribute the same cost as $\beta_N(k-1)=\beta_N(k)$ for all $k\in 2\N$ and the horizon length is effectively always odd as for even $N$ we have $\beta_N(N-1)=0$.
Waiting one step has a cost of $1 + \frac{N-2}{N}$ in these first $3$ steps and is hence the optimal strategy for all $N\in \N$ as going directly to the orbit costs $1+1.5\frac{N-2}{N}$.
As MPC applies only the first step of the optimal strategy it ends up waiting forever. 
When using a uniform discount, this problem cannot occur. 

\end{example}

The following assumption confines the choice of the discount function $\beta$.
This is required to guarantee practical asymptotic stability of the discounted E-MPC.

\begin{assumption}[Discount function]\label{ass:df}
	The discount function $\beta:[0,1]\to[0,1]$ is non-increasing, piecewise continuously differentiable, Lipschitz-continuous with Lipschitz constant $L_\beta$, and satisfies $\beta(0)=1$, $\beta(1)=0$, and $\mathrm{lim\,sup}_{\xi\to 1} \beta'(\xi)<0$.
\end{assumption}

The technical assumption $\mathrm{lim\,sup}_{\xi\to 1} \beta'(\xi)<0$ is made to simplify our proofs and does not pose a relevant constraint in practice.

\begin{example}[Discount functions]
	Discount functions that satisfy Assumption~\ref{ass:df} are, for example, the linear discount $\beta^\text{lin}(\xi) = 1-\xi$ from~\cite{1-schwenkel23}, the half-undiscounted-half-linear discount $\beta^\text{half-lin}(\xi) = \min\{1,2-2\xi\}$, the polynomial discount $\beta^\mathrm{poly}(\xi) = 1-\xi^q$ with degree $q\in\N$, and many more.
	The undiscounted discount $\beta^\text{un}(\xi)=1$ for all $\xi\in[0,1]$	does not satisfy $\beta(1)=0$.
	\label{ex:dfs}
\end{example}

\section{Main result}
\label{s:pas}
The main result of this work is that a discounted E-MPC scheme without terminal conditions asymptotically stabilizes a set that contracts to the optimal orbit $\Pi^\star$ with increasing prediction horizons if the discount satisfies Assumption~\ref{ass:df}. 

\begin{theorem}[Practical asymptotic stability]
	\label{thm:PAS}
Let Assumption~\ref{ass:orbit}-\ref{ass:df} hold.
Then, there exists $\varepsilon \in \mathcal{L}$ such that $\Pi^\star$ is practically asymptotically stable w.r.t. $\varepsilon(N)$ under the discounted E-MPC feedback $\mu_N^\beta$ for all $N\in\N$, i.e., there exists $\gamma \in \mathcal{KL}$ such that for all $ x\in \mathbb{X}$ and $k\in\mathbb{N}$ we have
\begin{align*}
	\left\lVert\left(x_{\mu_{N}^\beta} (k,x),\mu_N^\beta \big(x_{\mu_N^\beta}(k,x)\big)\right)\right\rVert_{\Pi^\star} \leq \max\big\{\gamma(\lVert x \rVert_{\Pi_\mathbb{X}^\star},k),\varepsilon(N)\big\}.
\end{align*}
\end{theorem}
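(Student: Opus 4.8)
The plan is to turn $V_N^\beta$ into a practical Lyapunov function for the closed loop and to finish with a standard practical-stability argument. The starting point is the exact identity
\[ J_N^\beta(x,u)=\sum_{m=1}^N w_m\,J_m(x,u),\qquad w_m:=\betakN{m-1}-\betakN{m}, \]
with non-negative weights summing to one, obtained by writing $\betakN{k}=\sum_{m>k}w_m$ (using $\beta(0)=1$, $\beta(1)=0$ from Assumption~\ref{ass:df}) and exchanging the order of summation. Thus the discounted objective is a convex combination of the undiscounted finite-horizon costs $J_m$, which lets me carry the standard undiscounted periodic E-MPC machinery through the weights $w_m$; for the linear discount $w_m\equiv1/N$, recovering the averaging used in~\cite{1-schwenkel23}.

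Telescoping each $J_m$ with the supply rate of Assumption~\ref{ass:s-diss} turns the shifted value function $\hat V_N^\beta(x):=V_N^\beta(x)+\lambda(x)-\ell^\star\sum_{k=0}^{N-1}\betakN{k}$ — which differs from $V_N^\beta$ only by an input-independent term and is therefore minimized along the very trajectory $\xNx$ that MPC applies — into $\hat V_N^\beta(x)=\sum_{m=1}^N w_m\,\hat J_m(x,\uNx)$, where $\hat J_m(x,u):=\tilde J_m(x,u)+\lambda(x_u(m,x))$ and $\tilde J_m(x,u):=\sum_{k=0}^{m-1}\tilde\ell(x_u(k,x),u(k))\ge0$. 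From here the sandwich bounds follow as in the undiscounted case: since $\tilde J_m\ge\tilde\ell(x,\mu_N^\beta(x))$ and $\sum_m w_m=1$, the lower bound is $\hat V_N^\beta(x)\ge\underline{\alpha}_{\tilde{\ell}}(\|x\|_{\PiXstar})$ modulo an additive storage offset, while the local-controllability warm start of Assumption~\ref{ass:lctrb} (reach $\Pistar$ within $M_1$ steps at a rotated cost bounded by a $\Kinf$-function of $\|x\|_{\PiXstar}$ and zero thereafter) furnishes the upper bound; the terminal-storage terms $\lambda(\xNx(m))$ are rendered nearly $x$-independent along the near-orbit optimal trajectory by the turnpike. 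The turnpike in turn comes from comparing $\uNx$ with the finite-time approach trajectory of Assumption~\ref{ass:ftr}, giving the $N$-independent bound $\sum_{k=0}^{N-1}\betakN{k}\tilde\ell(\xNx(k),\uNx(k))\le C_0$.

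For the decrease I shift the MPC-optimal trajectory: writing $\hat u$ for the tail $\uNx(1),\dots,\uNx(N-1)$, one checks $\hat J_m(x^+,\hat u)=\hat J_{m+1}(x,\uNx)-\tilde\ell(x,\mu_N^\beta(x))$ for every $m\le N-1$. Summing against $w_m$, re-indexing, and appending one controllability step at $m=N$ yields
\[ \hat V_N^\beta(x^+)-\hat V_N^\beta(x)\le -\tilde\ell(x,\mu_N^\beta(x))+A+e_N, \]
with the weight-difference sum $A:=\sum_{m=2}^N(w_{m-1}-w_m)\hat J_m(x,\uNx)$ and a boundary remainder $e_N$ collecting $-w_1\lambda(x^+)$, $w_N\hat J_N(x^+,\hat u)$ and the $O(1/N)$ correction to the coefficient of $\tilde\ell$.

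The \emph{main obstacle} is to bound $A$ and $e_N$ by a term in $\L$, since each weight difference is only $O(1/N)$ while $A$ has $O(N)$ summands. Two ingredients force them to vanish. First, the weights have small total variation: from $w_m\approx-\beta'(m/N)/N$ (Assumption~\ref{ass:df}) one gets $\sum_m|w_{m-1}-w_m|=O(1/N)$ and $w_1,w_N=O(1/N)$. Second — the crucial point — the \emph{unweighted} rotated cost $\tilde J_N(x,\uNx)=\sum_{k=0}^{N-1}\tilde\ell(\xNx(k),\uNx(k))$ grows only sublinearly, although it need not be bounded because the optimal trajectory may leave $\Pistar$ near the end of the horizon where $\betakN{k}\approx0$. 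Indeed, on the bulk where $\betakN{k}\ge\underline\beta>0$ the budget $C_0$ directly bounds the unweighted cost by $C_0/\underline\beta$, whereas near the end $\mathrm{lim\,sup}_{\xi\to1}\beta'(\xi)<0$ gives the linear lower bound $\betakN{N-j}\ge c\,j/N$, so any sustained excursion over the last $j$ steps consumes at least a multiple of $j^2/N$ of the budget $C_0$ and is hence confined to $O(\sqrt N)$ steps; together this gives $\tilde J_N(x,\uNx)=O(\sqrt N)$. Using $\tilde J_m\le\tilde J_N$ then yields $|A|\le\tilde J_N\sum_m|w_{m-1}-w_m|=O(1/\sqrt N)$ and likewise $e_N=O(1/\sqrt N)$, so $\hat V_N^\beta(x^+)-\hat V_N^\beta(x)\le-\underline{\alpha}_{\tilde{\ell}}(\|x\|_{\PiXstar})+\varepsilon(N)$ with $\varepsilon\in\L$. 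With the sandwich bounds, a standard practical-Lyapunov argument for MPC (cf.~\cite{1-schwenkel23},~\cite{18-gruene17}) then produces the asserted $\gamma\in\KL$ and offset $\varepsilon(N)$.
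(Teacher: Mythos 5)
Your overall architecture is sound and overlaps substantially with the paper's: the identity $J_N^\beta=\sum_{m=1}^N w_m J_m$ with $w_m=\betakN{m-1}-\betakN{m}$, the uniform budget bound on the discounted rotated cost, and the $O(\sqrt N)$ bound on the unweighted rotated cost $\tilde J_N(x,\uNx)$ (obtained, as in the paper's weak turnpike lemma, from $\beta\geq c\,\beta^{\mathrm{lin}}$, which follows from $\limsup_{\xi\to1}\beta'(\xi)<0$) are all correct. The gap is in the step that carries the whole proof: the bound on $A=\sum_{m=2}^N(w_{m-1}-w_m)\hat J_m(x,\uNx)$. You claim $\sum_m|w_{m-1}-w_m|=O(1/N)$ ``from $w_m\approx-\beta'(m/N)/N$'', but that estimate requires $\beta'$ to have bounded variation (e.g.\ $\beta$ piecewise $C^2$ or $C^{1,1}$), which Assumption~\ref{ass:df} does not provide: piecewise continuous differentiability plus Lipschitz continuity only yields $\sum_m|w_{m-1}-w_m|\leq\omega(2/N)+O(1/N)$, with $\omega$ the modulus of continuity of $\beta'$ on its pieces. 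Concretely, a discount with $\beta'(\xi)=-c_1+c_2(1-\xi)\sin\big(\tfrac{1}{1-\xi}\big)$ (constants chosen so that Assumption~\ref{ass:df} holds) has $\beta'$ continuous but of unbounded variation with $\omega(h)\sim\sqrt h$; then your estimate $|A|\leq\tilde J_N\cdot\sum_m|w_{m-1}-w_m|=O(\sqrt N)\cdot O(N^{-1/2})$ only stays bounded instead of vanishing, and for $\beta'$ with modulus of continuity $\sim 1/\log(1/h)$ it even diverges. Because $A$ pairs the weight differences against the quantity $\hat J_m$, which genuinely grows like $\sqrt N$, the ``small total variation'' you need is genuinely unavailable under the stated assumption; the same unproven claim also hides in your sandwich bounds, where the storage offsets of the lower and upper bound must be shown to coincide (both equal to the orbit average of $\lambda$) before the practical-Lyapunov argument can close.

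This is precisely the difficulty the paper's Lemma~\ref{lem:average} is built to avoid: it never pairs weight differences with $N$-growing quantities. It pairs the first differences $w_k$ only with a bounded continuous function $g$ evaluated along the turnpike, and controls the resulting phase sums $\sum_j w_{k_{p,j}}$ by sandwiching them between Darboux sums of $-\beta'/p^\star$, which converge to $1/p^\star$ by mere Riemann integrability of $\beta'$ --- no rate and no bounded variation needed (the price being that $\delta_1\in\L$ comes without an explicit rate). Your scheme could be repaired along the same lines: split $\hat J_m=\tilde J_m+\lambda(\xNx(m))$; summation by parts turns $\sum_m(w_{m-1}-w_m)\tilde J_m$ back into a first-difference sum bounded by $2L_\beta\tilde J_N/N=O(N^{-1/2})$ using only Lipschitz continuity; and the storage part becomes, after another summation by parts, $\sum_m w_m\,g(\xNx(m),\uNx(m))$ plus $O(1/N)$ boundary terms, where $g(x,u):=\lambda(f(x,u))-\lambda(x)$ is continuous with orbit average zero, so a Lemma~\ref{lem:average}-type averaging result makes it vanish. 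In other words, the missing ingredient is not a refinement of your estimates but the turnpike-plus-Riemann-sum averaging argument itself, which is the technical core of the paper's proof.
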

\begin{proof}
	Throughout the proof, we assume without loss of generality that $\ell$ is non-negative (cf.~\cite[Remark~20]{1-schwenkel23}). 
	Analogous to $J_N^\beta$, $V_N^\beta$, $\uNx$, and $\xNx$, we define $\tilde J_N^\beta$, $\tilde V_N^\beta$, $\uNxrot$, and $\xNxrot$ for the case by replacing $\ell$ by the rotated stage cost $\tilde \ell$ from~\eqref{eq:s-diss}.
	We prove the statement by showing that the rotated value function $\tilde V_N^\beta$ is a practical Lyapunov function in the sense of~\cite[Definition~13]{1-schwenkel23}.
	Showing positive definiteness of $\tilde \ell$ and $\tilde V_N^\beta$ is not different from the linearly discounted case~\cite[Lemma~27, Lemma~28]{1-schwenkel23} and is therefore not shown again here.
	The challenging part is to show that there exists $\delta\in\L$ such that the Lyapunov decrease condition 
	\begin{align}\label{eq:prac_lyap_decr2}
		\tilde V_N^\beta\left(x_{\mu_N^\beta} (1,x)\right) - \tilde V_N^\beta (x)  \leq -\tilde \ell \big(x, \mu_{N}^\beta(x)\big)  + \delta(N)
	\end{align}
	holds for all $x\in\X$. 
	To establish this inequality, we need the following two Lemmas, which are proven in the Appendix~\ref{app:lemmas}.	
	
	\begin{lemma}[Almost optimal candidate solution]\label{lem:almost_optimal_candidate}
		Let Assumptions~\ref{ass:orbit}--\ref{ass:df} hold. Then there exists $\delta_2\in\L$ such that for all $x\in\X$ and $N\in \N$ we have
		\begin{align}\label{eq:almost_optimal_candidate}
			V_N^\beta\big(\xNx(1)\big) - V_N^\beta (x)\leq -\ell\big(x,\uNx(0)\big) + \ell^\star + \delta_2(N).
		\end{align}
	\end{lemma}
	\begin{lemma}[Difference of value functions]\label{lem:diff_value}
		\label{thm:Lemma29}
		Let Assumptions~\ref{ass:orbit}-\ref{ass:df} hold. Then, there exists $\delta_3\in\mathcal{L}$ such that for all $x,y\in\mathbb{X}$, $N\in\N$ the following inequality holds
		\begin{equation}
			\tilde{V}_N^\beta(y)  - \tilde{V}_N^\beta(x) \leq \lambda(y) - \lambda(x)+ V_N^\beta(y) - V_N^\beta(x) + \delta_3(N).
			\label{eq:Lemma29}
		\end{equation}
	\end{lemma}
	These two Lemmas yield
	\begin{align*}
		\tilde V_N^\beta\big(\xNx (1)\big) - \tilde V_N^\beta (x)  &\refeq{\eqref{eq:Lemma29}}{\leq} \lambda\big(\xNx(1)\big) - \lambda(x) + V_N^\beta\big(\xNx(1)\big) - V_N^\beta(x) + \delta_3(N)\\
		&\refeq{\eqref{eq:almost_optimal_candidate}}{\leq}\lambda\big(\xNx(1)\big) - \lambda(x) -\ell\big(x,u_x^\star (0)\big)+\ell^\star + \delta_2(N)+\delta_3(N)\\
		&= -\tilde \ell\big(x,\uNx(0)\big)+\delta(N)
	\end{align*}
	with $\delta:=\delta_2+\delta_3\in\L$.
	Since $\uNx(0)=\mu_N^\beta(x)$ and  $\xNx(1)=x_{\mu_N^\beta} (1,x)$, this establishes~\eqref{eq:prac_lyap_decr2} and hence~\cite[Theorem~14]{1-schwenkel23} guarantees practical asymptotic stability with respect to $\varepsilon(N)$ for some $\varepsilon\in\L$.  \hfill \qed 
\end{proof}
Due to Theorem~\ref{thm:PAS} we directly obtain that the asymptotic average performance is optimal up to an error vanishing with growing prediction horizons.
\begin{corollary}[Asymptotic average performance]\label{cor:aap}
	Let Assumption~\ref{ass:orbit}--\ref{ass:df} hold. Then there exists $\varepsilon_2\in\L$ such that the MPC feedback $\mu_N^\beta$ achieves an asymptotic average performance of $J_\infty^\mathrm{av} (x,\mu_N^\beta)\leq \ell^\star + \varepsilon_2(N)$.
\end{corollary}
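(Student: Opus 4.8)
The plan is to exploit strict dissipativity (Assumption~\ref{ass:s-diss}) to rewrite the accumulated closed-loop cost as the optimal average $\ell^\star$ plus a telescoping storage term plus the accumulated rotated stage cost, and then to control the latter using the practical stability guarantee of Theorem~\ref{thm:PAS}. Writing $x_k := x_{\mu_N^\beta}(k,x)$ and $u_k := \mu_N^\beta(x_k)$, so that $x_{k+1} = f(x_k,u_k)$, the dissipativity identity \eqref{eq:s-diss} gives $\ell(x_k,u_k) - \ell^\star = \tilde\ell(x_k,u_k) - \lambda(x_k) + \lambda(x_{k+1})$, and summing over the telescoping storage differences yields
\[
\frac{1}{T} J_T(x,\mu_N^\beta) = \ell^\star + \frac{1}{T}\sum_{k=0}^{T-1}\tilde\ell(x_k,u_k) + \frac{\lambda(x_T)-\lambda(x_0)}{T}.
\]
Since $|\lambda| \leq \bar\lambda$ is finite by Assumption~\ref{ass:cac}, the last term vanishes as $T\to\infty$, so it remains to bound the averaged rotated stage cost in the limsup.

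Next I would establish an upper bound on $\tilde\ell$ in terms of the distance to the orbit. A short computation shows that $\tilde\ell$ vanishes on $\Pistar$: telescoping the storage term around the cycle together with \eqref{eq:ellstar} gives $\sum_{k=0}^{p^\star-1}\tilde\ell(\Pi^\star(k)) = 0$, and since each summand is non-negative by \eqref{eq:s-diss}, each is zero. As $\tilde\ell$ is continuous and non-negative on the compact set $\X\times\U$ and vanishes on $\Pistar$, a standard argument yields some $\bar\alpha_{\tilde\ell}\in\Kinf$ with $\tilde\ell(x,u) \leq \bar\alpha_{\tilde\ell}(\lVert(x,u)\rVert_{\Pistar})$ for all feasible $(x,u)$.

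I would then combine this bound with Theorem~\ref{thm:PAS}. Monotonicity of $\bar\alpha_{\tilde\ell}$ gives $\tilde\ell(x_k,u_k) \leq \max\{\bar\alpha_{\tilde\ell}(\gamma(\lVert x \rVert_{\Pi_\X^\star},k)),\bar\alpha_{\tilde\ell}(\varepsilon(N))\}$. Because $\gamma(\cdot,k)\to 0$ as $k\to\infty$, there is a finite time, independent of $T$, after which the maximum equals $\bar\alpha_{\tilde\ell}(\varepsilon(N))$; the finitely many earlier terms are uniformly bounded on the compact constraint set and therefore contribute nothing to the Ces\`aro limsup. Hence $\limsup_{T\to\infty}\frac{1}{T}\sum_{k=0}^{T-1}\tilde\ell(x_k,u_k) \leq \bar\alpha_{\tilde\ell}(\varepsilon(N))$, and setting $\varepsilon_2 := \bar\alpha_{\tilde\ell}\circ\varepsilon$ completes the proof. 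This $\varepsilon_2$ lies in $\L$, since $\bar\alpha_{\tilde\ell}\in\Kinf$ is increasing with $\bar\alpha_{\tilde\ell}(0)=0$ and $\varepsilon\in\L$.

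The main obstacle I anticipate is the careful handling of the limsup: one must argue that the transient terms, where $\gamma$ dominates $\varepsilon(N)$, do not inflate the average, which relies on their number being independent of $T$ and their magnitude being uniformly bounded by $\bar\alpha_{\tilde\ell}$ evaluated on the compact set. The $\KL$-to-$\L$ transition and the existence of the upper $\Kinf$-bound on $\tilde\ell$ are routine but worth stating explicitly.
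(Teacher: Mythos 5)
Your proposal is correct, but it takes a genuinely different route from the paper's proof. The paper works with the original cost $\ell$ directly: it fixes $T_1$ so that $\gamma(\lVert x\rVert_{\PiXstar},k)\leq\varepsilon(N)$ for $k\geq T_1$, bounds the transient piece $\I_{[0,T_1-1]}$ crudely by $\bar\ell$, and bounds the tail piece --- where Theorem~\ref{thm:PAS} keeps the closed loop in the $\varepsilon(N)$-neighborhood of $\Pistar$ --- by citing Lemma~24 of~\cite{1-schwenkel23}, which states that a trajectory remaining near the orbit accrues cost at most $(T-T_1+p^\star-1)\ell^\star$ plus an error $T\varepsilon_2(N)$; that external lemma (and its $p^\star-1$ correction) is needed precisely because $\ell-\ell^\star$ vanishes on $\Pistar$ only \emph{on average} over a period, so a phase-tracking argument is unavoidable. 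You instead telescope the storage function, reducing the claim to a bound on the Ces\`aro average of the rotated cost $\tilde\ell$, which is non-negative and vanishes \emph{pointwise} on $\Pistar$ (your cycle-summation argument for this is exactly Lemma~27 of~\cite{1-schwenkel23}); a pointwise upper bound $\tilde\ell(x,u)\leq\bar\alpha_{\tilde\ell}(\lVert(x,u)\rVert_{\Pistar})$ with $\bar\alpha_{\tilde\ell}\in\Kinf$ together with Theorem~\ref{thm:PAS} then yields $\varepsilon_2=\bar\alpha_{\tilde\ell}\circ\varepsilon$ explicitly. What your route buys: no phase bookkeeping and no appeal to the near-orbit cost lemma of the prior paper, only the standard comparison-function construction (which this paper also uses, as $\overline\alpha_{\tilde\ell}$, in the proof of Lemma~\ref{lem:diff_value}). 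What it costs: your argument intrinsically needs continuity of $\lambda$ (boundedness for the telescoping, continuity for the $\Kinf$ bound on $\tilde\ell$), which the remark following the corollary identifies as an assumption one would hope to drop in pure performance statements; the paper's tail estimate, working with $\ell$, is structurally closer to permitting that relaxation. Two small points to make explicit if you write this up: the closed-loop pair $(x_k,u_k)$ is feasible by construction of the MPC, so \eqref{eq:s-diss} indeed applies along the closed loop; and in the degenerate case $\varepsilon(N)=0$ the ``finite time after which the maximum equals $\bar\alpha_{\tilde\ell}(\varepsilon(N))$'' step should be replaced by an auxiliary threshold $\eta>0$ sent to zero, using continuity of $\bar\alpha_{\tilde\ell}$.
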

\begin{proof}
	For given $N$ and $x$, let $T_1\in\N$ be large enough such that $\gamma(\|x\|_\Pistar, T_1)\leq \varepsilon(N)$. 
	For all $T\geq T_1$ we split $J_T(x,\mu_N^\beta)$ in the interval $\I_{[0,T_1-1]}$ and the interval $\I_{[T_1,T-1]}$ in which $\big(\xNx(k),\uNx(k)\big)$ is inside the $\varepsilon(N)$ neighborhood of $\Pistar$ due to Theorem~\ref{thm:PAS}. 
	We use $\bar \ell$ to bound the first interval as well as $\varepsilon_2\in\mathcal L$ and the bound from \cite[Lemma~24]{1-schwenkel23} to bound the second interval, i.e., 
	\begin{align*}
		\frac 1 T J_T(x,\mu_N^\beta) \leq \frac {T_1}T \bar\ell + \frac{T-T_1+p^\star-1}T \ell^\star + \varepsilon_2(N).
	\end{align*}
	Now, apply $\limsup_{T \to \infty}$ to both sides to obtain the desired result.  \hfill \qed
\end{proof}
\begin{remark}
	In~\cite{1-schwenkel23}, the same asymptotic average performance guarantee was given without assuming minimality of $\Pi^\star$ and continuity of $\lambda$.
	This required an individual proof, as it could not be stated as corollary of the practical asymptotic stability result, which assumed minimality of $\Pi^\star$ and continuity of $\lambda$.
	To keep this paper concise, we did not present such a proof but we conjecture that the performance bound could similarly be established without these assumptions.
\end{remark}

\section{Numerical Example}
\label{s:NumEx}

In this section, we present two numerical examples, compare different discount functions, and compare different E-MPC schemes for periodic operation.

\begin{example}[Periodic orbit]
\label{ex:peror}
Consider the scalar system $x(t+1) = -x(t)+u(t)$ equipped with the stage cost $\ell(x,u) = x^3$ and the constraint sets $\mathbb{X}=[-1,1]$ and $\mathbb{U} = [-0.1,0.1]$. 
The system is optimally operated at the $2$-periodic orbit $\Pistar(0) = (-1,-0.1)$, $\Pistar(1)=(0.9,-0.1)$ with the average cost $\ell^\star = \frac 1 2 (0.9^3-1)\approx -0.136$.
In Fig.~\ref{f:aap}, we compare the asymptotic average performance of the proposed discounted E-MPC scheme, starting from the initial condition $x_0=0.05$ for the different discount functions $\beta \in\{\beta^\mathrm{lin},\beta^\mathrm{half-lin},\beta^\mathrm{un},\beta^\mathrm{poly}\}$ from Example~\ref{ex:dfs}, where we chose degree $q=2$ for the polynomial discount.
We observe that the undiscounted scheme does not converge to the optimal average performance for all prediction horizon lengths $N\neq 3$. 
For all odd prediction horizons $N\geq 9$ the scheme gets stuck in the steady state $(0.05,0.1)$ and for all even prediction horizons $N\geq 10$ the scheme gets stuck in the periodic orbit $\Pi_s(0)=(0.05, -0.1)$, $\Pi_s(1)=(-0.15, -0.1)$.
This failure is not surprising, as $\beta^\text{un}$ does not satisfy Assumption~\ref{ass:df} and hence, our theoretical guarantees do not hold for $\beta^\text{un}$.
Among the other discounts, which satisfy Assumption~\ref{ass:df}, we can see that $\beta^\text{half-lin}$ performs best and achieves optimal performance for all $N\geq 7$, with $\beta^\text{poly}$ we need $N\geq 9$, and with $\beta^{\text{lin}}$ we need $N\geq 11$.

\begin{figure}[t]
	\centering
%
%
\definecolor{mycolor1}{rgb}{0.00000,0.44700,0.74100}%
\definecolor{mycolor2}{rgb}{0.85000,0.32500,0.09800}%
\definecolor{mycolor3}{rgb}{0.92900,0.69400,0.12500}%
\definecolor{mycolor4}{rgb}{0.1,0.7,0.15}%
\definecolor{grey}{rgb}{0.6100,0.6100,0.6100}%
\begin{tikzpicture}
	
	\begin{axis}[%
		width=2.9in,
		height=1.3in,
		ylabel near ticks,
		at={(0.758in,0.481in)},
		scale only axis,
		line width=1,
		xmin=1,
		xmax=20,
		xlabel={prediction horizon N},
		ytick={0,0.05,0.1,0.15},
		yticklabels={0,0.05,0.1,0.15},
		ymin=0,
		ymax=0.15,
		ymajorgrids,
		ylabel={$J_\infty^\mathrm{av}(x_0,\mu_N^\beta )-\ell^\star$},
		axis background/.style={fill=white},
		legend style={legend cell align=left, align=left, yshift=-0.7cm,xshift=0.16cm}
		]
\addplot [color=mycolor1,mark=*,solid,line width=1]
  table[row sep=crcr]{%
2	0.133875014620245\\
3	3.3070324256812e-11\\
4	0.122634495421876\\
5	0.135625015023722\\
6	0.0986268641346905\\
7	0.135625015023801\\
8	0.0626246051404688\\
9	0.135625015023828\\
10	0.133875014648966\\
11	0.135625015023841\\
12	0.133875014649067\\
13	0.135625015023841\\
14	0.133875014649067\\
15	0.135625015023841\\
16	0.133875014649067\\
17	0.135625015023841\\
18	0.133875014649067\\
19	0.135625015023841\\
20	0.133875014649067\\
21	0.135625015023841\\
22	0.133875014649067\\
23	0.135625015023841\\
24	0.133875014649067\\
25	0.135625015023841\\
26	0.133875014649067\\
27	0.135625015023841\\
28	0.133875014649067\\
29	0.135625015023841\\
30	0.133875014649067\\
};
\addlegendentry{$\beta^\text{un}$}

\addplot [color=mycolor3,dotted,mark=*,line width=1, mark options={solid, mycolor3, line width=1.5pt}] 
table[row sep=crcr]{%
	2	0.133875014591503\\
	3	0.122632771046457\\
	4	0.122633453287934\\
	5	0.0986253075333653\\
	6	0.0986257857097516\\
	7	0.0626232938517121\\
	8	0.0626235159150876\\
	9	0.0146217820637248\\
	10	0.0146211776122067\\
	11	2.23895346707081e-11\\
	12	3.10024228511452e-11\\
	13	1.00878194686516e-11\\
	14	1.14361853320588e-11\\
	15	9.82969261542621e-12\\
	16	1.17302001445552e-11\\
	17	1.0189460386556e-11\\
	18	1.1139533739879e-11\\
	19	4.67126337611035e-12\\
	20	6.66844357510854e-12\\
	21	-6.63935573186336e-12\\
	22	5.47129008765523e-12\\
	23	4.32326396904159e-12\\
	24	5.69722047316645e-12\\
	25	8.35076452432304e-12\\
	26	1.02363673093464e-11\\
	27	-5.73008307469536e-12\\
	28	2.67005861864789e-12\\
	29	-5.7021054544748e-13\\
	30	-6.82567891097108e-12\\
};
\addlegendentry{$\beta^\text{lin}$}

\addplot [color=mycolor4, solid, line width=1pt, mark size=3pt, mark=x, mark options={solid, mycolor4, line width=1pt}]
table[row sep=crcr]{%
	2	0.133875014611413\\
	3	0.107959694806669\\
	4	0.118791235766205\\
	5	0.0626229697451881\\
	6	0.0829785156411119\\
	7	1.91527349535647e-10\\
	8	0.0263265363477384\\
	9	9.64006652282023e-12\\
	10	3.14107073684511e-12\\
	11	5.6772364587232e-12\\
	12	4.11393141774852e-12\\
	13	-6.00519634019747e-12\\
	14	-5.96478422210112e-12\\
	15	-3.53700402300205e-12\\
	16	-7.96207544340177e-12\\
	17	-6.79556411142812e-12\\
	18	-4.26336743686306e-12\\
	19	-7.92407806038398e-12\\
	20	-3.9446501620688e-12\\
	21	-4.12422873630192e-12\\
	22	1.68905445185885e-11\\
	23	6.41514619204031e-12\\
	24	-3.01225711041297e-12\\
	25	8.17224066196331e-12\\
	26	-6.25266505238642e-12\\
	27	-3.714140106581e-12\\
	28	-3.67839092518807e-12\\
	29	-6.29918339711821e-12\\
	30	4.90921192586313e-12\\
};
\addlegendentry{$\beta^\text{poly}$}
\addplot [color=mycolor2,mark=o,dashed,line width=1pt, mark options={solid, mycolor2, line width=1pt}]
  table[row sep=crcr]{%
2	0.133875014620427\\
3	0.0986247836505859\\
4	0.0986258227126687\\
5	0.0401224682110728\\
6	0.0821250562026801\\
7	7.86826159782095e-12\\
8	1.48809853328657e-11\\
9	3.66706665033689e-13\\
10	1.40381040125703e-11\\
11	6.12399020383236e-13\\
12	5.95945515158292e-12\\
13	-3.61516372393567e-12\\
14	-7.39908134761436e-12\\
15	7.23776594213632e-12\\
16	-3.86116139061699e-12\\
17	-3.28004290395256e-12\\
18	-3.55870888313348e-12\\
19	-6.95093982372441e-12\\
20	-3.73301389799963e-12\\
21	-6.89112655827273e-12\\
22	-3.91178756053989e-12\\
23	-2.93765012315816e-12\\
24	4.72355488057019e-12\\
25	5.27955457130247e-12\\
26	4.01489952395195e-12\\
27	4.69813077330627e-12\\
28	-4.59909887950971e-12\\
29	4.63773464076667e-12\\
30	4.4406700538957e-12\\
};
\addlegendentry{$\beta^\text{half-lin}$}
\end{axis}

\end{tikzpicture}%
	\caption{Performance comparison for different discounts in Example~\ref{ex:peror}.}
	\label{f:aap}
\end{figure}

For comparison, we take a look at the $p^\star$-step E-MPC scheme from~\cite{3-mueller16}, which converges to the optimal orbit for all prediction horizon lengths $N\geq 3$ in this example.
Hence, in this aspect it is better than the three discounted E-MPC schemes, however, it needs to know the optimal period length $p^\star$ in advance, provides only convergence but no stability guarantees, and as discussed in~\cite[Example~18]{1-schwenkel23} may lead to suboptimal transient performance.

Moreover, we compare to an E-MPC with terminal equality constraints (a special case of~\cite{14-zanon16}), i.e., at time $t$ we solve~\eqref{eq:VNbeta} with $\beta=\beta^\text{un}$ and the additional constraint that $x_u(N,x)=\PiXstar([\phi_\text{end}+t]_2)$ with phase $\phi_\text{end}\in\{0,1\}$.
We denote the resulting time-dependent MPC feedback by $\mu_N^\mathrm{TC}(x, t)$.
If we start at $t=0$ at $x_0 = \PiXstar(\phi_0)$ with phase $\phi_0$, then either $[\phi_0+N]_2=\phi_\text{end}$ or $[\phi_0+N]_2\neq\phi_\text{end}$.
In the first case, the OCP is feasible for all horizon lengths $N\geq 1$ and the closed-loop follows the optimal orbit $\Pistar$.
In the second case, however, we need $N\geq 20$ for feasibility in order to leave $\Pi^\star$ and approach it with the synchronized phase again, which additionally leads to a rather high transient cost.
For example, starting at $\phi_0=0$, i.e., $x_0=-1$, choosing $N=20$ and $\phi_\text{end}=1$, leads to $J_{20}\left(x_0,\mu_{20}^\mathrm{TC}\right)=-1$ and $J_{21}\left(x,\mu_{20}^\mathrm{TC}\right)=-0.271$ in closed loop, whereas $J_{20}\left(x_0,\mu_7^{\beta^{\mathrm{half-lin}}}\right)=-2.71$ and $J_{21}\left(x,\mu_7^{\beta^{\mathrm{half-lin}}}\right)=-3.71$. 

\end{example}

\begin{example}[Economic growth]
	\label{ex:ecgr}
	Consider the scalar system $x(t+1) = u(t)$, which is equipped with the stage cost $\ell(x,\,u) = -\log(5x^{0.34}-u)$, constrained to the sets $\mathbb{X} = [0,10]$, $\mathbb{U} = [0.1,10]$, and initialized at $x(0)=0.1$.
	It is known that the optimal cost $\ell^\star \approx -1.47$ is achieved at the steady state $\Pi^\star \approx (2.23,2.23)$, i.e., we have an optimal orbit of length $p^\star=1$ (compare~\cite[Example~5.1]{5-gruene14}).
	Therefore, although $\beta^\text{un}$ does not satisfy Assumption~\ref{ass:df}, we still have guaranteed practical stability from~\cite{5-gruene14}.
	In Fig.~\ref{f:comp132}, we compare the asymptotic average performance for the discount functions $\beta^\mathrm{un}$, $\beta^\mathrm{lin}$, $\beta^\mathrm{half-lin}$,  and $\beta^\mathrm{poly}$ with degree $q=2$.
	As known from~\cite{5-gruene14} and~\cite[Example~19]{1-schwenkel23}, we can see that the performance converges exponentially in $N$ for $\beta^\mathrm{un}$ and with the approximate rate $\sim \frac {1}{N^2}$ for $\beta^\mathrm{lin}$.
	Further, we can see that $\beta^\mathrm{half-lin}$ is able to restore the exponential convergence rate and thus, significantly outperforms $\beta^\mathrm{lin}$ and $\beta^\mathrm{poly}$.
	For example, for an error of less than $10^{-10}$ we need $N\geq 11$ with $\beta^\text{un}$, $N\geq 17$ with $\beta^\text{half-lin}$ and $N\geq 6\cdot 10^4$ for $\beta^\text{lin}$ (compare~\cite{1-schwenkel23}).
	
	\begin{figure}[t!]
		\centering
%
\definecolor{mycolor1}{rgb}{0.00000,0.44700,0.74100}%
\definecolor{mycolor2}{rgb}{0.85000,0.32500,0.09800}%
\definecolor{mycolor3}{rgb}{0.92900,0.69400,0.12500}%
\definecolor{mycolor4}{rgb}{0.1,0.7,0.15}%
\begin{tikzpicture}

\begin{axis}[%
width=3.4in,
height=1.4in,
at={(0.5in,0.4in)},
scale only axis,
line width=1,
xmin=0.05,
xmax=40,
xlabel style={font=\color{white!15!black}},
xlabel={prediction horizon $N$},
ymode=log,
ymin=9e-15,
ymax=1,
ytick={0.00000000000001, 0.0000000001, 0.00001,1},
yminorticks=true,
ylabel style={font=\color{white!15!black}},
ylabel={ $J_\infty^\mathrm{av}(x_0,\mu_N^\beta)-\ell^\star$},
axis background/.style={fill=white},
xmajorgrids,
ymajorgrids,
yminorgrids,
legend columns = 2,
legend style={xshift = 0.18cm, yshift=-2.37cm, legend cell align=left, align=left, draw=white!15!black}
]

\addplot [only marks,mark=*,line width=1, mark options={solid, mycolor3, line width=1pt}]
  table[row sep=crcr]{%
  	1	0.685458831821469\\
  	2	0.172532129555068\\
  	3	0.068247055826915\\
  	4	0.0350720453964413\\
  	5	0.0209843310438171\\
  	6	0.0138690926168494\\
  	7	0.0098228711263979\\
  	8	0.0073151966542242\\
  	9	0.00565730818671639\\
  	10	0.00450519745037914\\
  	11	0.00367239432813937\\
  	12	0.00305095616627948\\
  	13	0.00257495449778178\\
  	14	0.00220229448082776\\
  	15	0.00190508028831271\\
  	16	0.0016642330756953\\
  	17	0.00146634623776665\\
  	18	0.00130177543878474\\
  	19	0.00116343899056992\\
  	20	0.00104604194797941\\
  	21	0.000945561114852556\\
  	22	0.000858895291368178\\
  	23	0.000783622825109687\\
  	24	0.000717830421509191\\
  	25	0.000659990241490771\\
  	26	0.00060887032282908\\
  	27	0.000563468383898424\\
  	28	0.000522962285063322\\
  	29	0.000486672523295928\\
  	30	0.000454033531472886\\
  	31	0.000424571496715931\\
  	32	0.000397887058666502\\
  	33	0.000373641698113492\\
  	34	0.000351546942985781\\
  	35	0.000331355744372708\\
  	36	0.000312855537905943\\
  	37	0.000295862624295484\\
  	38	0.000280217589978538\\
  	39	0.000265781553529632\\
  	40	0.0002524330719198\\
};
\addlegendentry{$\beta^\mathrm{lin}$}

\addplot [only marks, mark size=3pt, mark=x, mark options={solid, mycolor4, line width=1pt}]
  table[row sep=crcr]{%
  	1	0.685458831821469\\
  	2	0.0727583165970183\\
  	3	0.0185658442326662\\
  	4	0.00645766953533888\\
  	5	0.00271671063583678\\
  	6	0.00131163976516624\\
  	7	0.000703189293964623\\
  	8	0.000408889346349417\\
  	9	0.000253449493740288\\
  	10	0.000165327033670781\\
  	11	0.000112406369038087\\
  	12	7.90829790433545e-05\\
  	13	5.72543703789563e-05\\
  	14	4.24708999480217e-05\\
  	15	3.21697433780521e-05\\
  	16	2.48133342699575e-05\\
  	17	1.94461157754233e-05\\
  	18	1.54557186946569e-05\\
  	19	1.24389808364711e-05\\
  	20	1.01240658030299e-05\\
  	21	8.32374688286563e-06\\
  	22	6.90659039004338e-06\\
  	23	5.77872959839354e-06\\
  	24	4.87207425559077e-06\\
  	25	4.13652609565496e-06\\
  	26	3.53474298675494e-06\\
  	27	3.03855711569767e-06\\
  	28	2.62648629223428e-06\\
  	29	2.28197985929768e-06\\
  	30	1.99216591356688e-06\\
  	31	1.7469455468877e-06\\
  	32	1.53833048832652e-06\\
  	33	1.3599535788611e-06\\
  	34	1.20670337455664e-06\\
  	35	1.07444886987551e-06\\
  	36	9.59830301461295e-07\\
  	37	8.60098862354164e-07\\
  	38	7.72992943209871e-07\\
  	39	6.96641882402815e-07\\
  	40	6.29490591652626e-07\\
};
\addlegendentry{$\beta^\text{poly}$}

\addplot [color=mycolor1,mark=*,solid,line width=1]
  table[row sep=crcr]{%
  	1	0.685458831821469\\
  	2	0.0256512326473008\\
  	3	0.00235508659599493\\
  	4	0.00025395832886721\\
  	5	2.86956636026403e-05\\
  	6	3.29189894876514e-06\\
  	7	3.79557375529416e-07\\
  	8	4.38381886347372e-08\\
  	9	5.06617592321845e-09\\
  	10	5.85589354784588e-10\\
  	11	6.76920741682352e-11\\
  	12	7.82529596676795e-12\\
  	13	9.04609720464578e-13\\
  	14	1.04805053524615e-13\\
  	15	1.24344978758018e-14\\
  	16	1.55431223447522e-15\\
  	17	4.44089209850063e-16\\
  	18	4.44089209850063e-16\\
  	19	4.44089209850063e-16\\
  	20	4.44089209850063e-16\\
  	21	0\\
  	22	4.44089209850063e-16\\
  	23	4.44089209850063e-16\\
  	24	4.44089209850063e-16\\
  	25	4.44089209850063e-16\\
  	26	4.44089209850063e-16\\
  	27	4.44089209850063e-16\\
  	28	4.44089209850063e-16\\
  	29	0\\
  	30	4.44089209850063e-16\\
  	31	4.44089209850063e-16\\
  	32	0\\
  	33	0\\
  	34	0\\
  	35	0\\
  	36	0\\
  	37	0\\
  	38	0\\
  	39	0\\
  	40	0\\
};
\addlegendentry{$\beta^\text{un}\ \ $}

\addplot [only marks,mark=o,dashed,line width=1, mark options={solid, mycolor2, line width=1pt}]
table[row sep=crcr]{%
	1	0.685458831821469\\
	2	0.0256512326473008\\
	3	0.00715757888908009\\
	4	0.00107164139760108\\
	5	0.00034195634612022\\
	6	6.24666681365849e-05\\
	7	2.13611568928851e-05\\
	8	4.24956240774321e-06\\
	9	1.52641835216016e-06\\
	10	3.19488373978771e-07\\
	11	1.19049116120706e-07\\
	12	2.57929682057068e-08\\
	13	9.88066384266517e-09\\
	14	2.19492202191418e-09\\
	15	8.58754845012299e-10\\
	16	1.94396498898186e-10\\
	17	7.73137109888467e-11\\
	18	1.77597936357188e-11\\
	19	7.15583148291898e-12\\
	20	1.66311409088848e-12\\
	21	6.77458089626271e-13\\
	22	1.58761892521397e-13\\
	23	6.52811138479592e-14\\
	24	1.55431223447522e-14\\
	25	6.21724893790088e-15\\
	26	1.55431223447522e-15\\
	27	6.66133814775094e-16\\
	28	4.44089209850063e-16\\
	29	4.44089209850063e-16\\
	30	4.44089209850063e-16\\
	31	0\\
	32	0\\
	33	4.44089209850063e-16\\
	34	0\\
	35	4.44089209850063e-16\\
	36	4.44089209850063e-16\\
	37	4.44089209850063e-16\\
	38	4.44089209850063e-16\\
	39	0\\
	40	0\\
};
\addlegendentry{$\beta^\text{half-lin}$}
\end{axis}

\end{tikzpicture}%
		\caption{Performance comparison for different discounts in Example~\ref{ex:ecgr}.}
		\label{f:comp132}
	\end{figure}
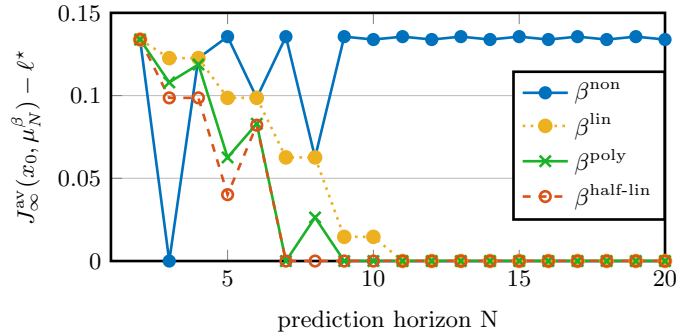
	
\end{example}

\section{Conclusion and outlook}
\label{s:cao}

We have presented in this paper a discounted E-MPC scheme without terminal conditions for nonlinear systems which behave optimally on a periodic orbit.
We provided requirements on discount functions such that practical asymptotic stability and asymptotic average performance can be guaranteed.
As seen in Example~\ref{ex:ecgr}, the quantitative performance level can vary strongly for different discounts and the performance of the linear discount from \cite{1-schwenkel23} can be outperformed significantly.
Based on the findings in our numerical examples we conjecture that $\beta^\mathrm{half-lin}$ is generally a good choice when dealing with periodic orbits, i.e., linearly discounting only the second half of the prediction horizon.
When dealing with steady states (optimal period length $p^\star =1$), we found in our examples no advantage of using discounts instead of the standard undiscounted E-MPC scheme.
However, a more extensive numerical study is required to draw conclusions about which discount is best in which situation.
Additionally, it is interesting to theoretically investigate under which assumption on the discount function an exponential convergence rate is guaranteed, as is the case for optimal steady-state operation without discounting~\cite{5-gruene14}.

\appendix
\section{Proof of Lemmas~\ref{lem:almost_optimal_candidate} and~\ref{lem:diff_value}}
\label{app:lemmas}
In order to prove Lemmas~\ref{lem:almost_optimal_candidate} and~\ref{lem:diff_value}, we first show in Lemma~\ref{thm:wtp} that the weak turnpike property (cf.~\cite[Definition~7]{1-schwenkel23}) holds and we use it in Lemma~\ref{lem:average} to compute the average of a function over turnpike trajectories.

\begin{lemma}[Weak turnpike property]
	\label{thm:wtp}
	Let Assumptions~\ref{ass:orbit}-\ref{ass:df} hold.
	We define the number of points of the optimal trajectory $\uNx$ in an $\varepsilon$-neighborhood of the optimal orbit $\Pi^\star$ as $
		Q_\varepsilon^\beta(N,\,x) := \#\left\lbrace k \in \mathbb{I}_{[0,N-1]} \big| \big\lVert \big(\xNx(k),\uNx(k)\big)\big\rVert_{\Pi^\star} \leq \varepsilon \right\rbrace$.
	Then, there exists $\alpha \in \mathcal{K}_\infty$ such that for all $x\in\mathbb{X}$, $N \in\N$, and $\varepsilon > 0$ we have
	\begin{equation}
		Q_\varepsilon^\beta(N,\,x) \geq N -\textstyle\frac{\sqrt{N}}{\alpha(\varepsilon)}.
		\label{eq:wtp-Q}
	\end{equation}
\end{lemma}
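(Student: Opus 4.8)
The plan is to combine strict dissipativity (Assumption~\ref{ass:s-diss}) with a \emph{uniform} bound on the rotated discounted cost of the optimal trajectory, and then to carefully account for the fact that the discount $\betakN{k}$ vanishes as $k\to N$. I expect the latter to be the crux of the argument and the reason the estimate carries a $\sqrt N$ rather than a constant.

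First I would pass from the original to the rotated discounted cost. Writing $\lambda_k:=\lambda(\xNx(k))$ and applying summation by parts (Abel summation) to the telescoping storage-function terms in $\tilde\ell$, I obtain
\[
\tilde J_N^\beta(x,\uNx) = V_N^\beta(x) - \ell^\star \sum_{k=0}^{N-1}\betakN{k} + R_N,
\]
with the remainder
\[
R_N = \lambda_0 + \sum_{k=1}^{N-1}\bigl(\betakN{k}-\betakN{k-1}\bigr)\lambda_k - \betakN{N-1}\,\lambda_N .
\]
Since $\beta$ is non-increasing, $\sum_{k=1}^{N-1}\lvert\betakN{k}-\betakN{k-1}\rvert = \beta(0)-\betakN{N-1}\le 1$, and with $\lvert\lambda(\cdot)\rvert\le\bar\lambda$ from Assumption~\ref{ass:cac} this gives $\lvert R_N\rvert\le 3\bar\lambda$ uniformly in $N$ and in the trajectory. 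The same identity applies to any feasible input.

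Second, I would bound $\tilde J_N^\beta(x,\uNx)$ uniformly. Using finite-time reachability (Assumption~\ref{ass:ftr}) followed by local controllability (Assumption~\ref{ass:lctrb}), I construct a reference input $u^\mathrm{ref}$ that steers $x$ onto $\Pi^\star$ within at most $M_1+M_2$ steps and then follows the orbit. A short computation shows $\tilde\ell=0$ on $\Pi^\star$ (the rotated cost sums to zero over one period by Assumption~\ref{ass:orbit} and is nonnegative by dissipativity), so only the transient contributes and $\tilde J_N^\beta(x,u^\mathrm{ref})\le (M_1+M_2)\tilde{\bar\ell}=:C'$, where $\tilde{\bar\ell}$ bounds $\tilde\ell$ on the compact constraint set. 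Combining optimality of $\uNx$ for $J_N^\beta$ with the summation-by-parts identity applied to both $\uNx$ and $u^\mathrm{ref}$, the $\ell^\star\sum_k\betakN{k}$ terms cancel and I am left with $\tilde J_N^\beta(x,\uNx)\le \tilde J_N^\beta(x,u^\mathrm{ref}) + 2\cdot 3\bar\lambda =: C''$, uniformly in $N$ and $x$.

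Finally, I would turn this into the counting estimate. By dissipativity, every index $k$ with $\lVert(\xNx(k),\uNx(k))\rVert_{\Pi^\star}>\varepsilon$ contributes at least $\betakN{k}\,\underline{\alpha}_{\tilde\ell}(\varepsilon)$ to $\tilde J_N^\beta(x,\uNx)$. Writing $m:=N-Q_\varepsilon^\beta(N,x)$ for the number of such indices and using that $\betakN{k}$ is non-increasing in $k$, the sum over these indices is minimized when they sit at the \emph{end} of the horizon, so $\sum_{\text{far}}\betakN{k}\ge\sum_{k=N-m}^{N-1}\betakN{k}$. This is exactly where $\limsup_{\xi\to1}\beta'(\xi)<0$ enters: it yields a constant $c>0$ with $\beta(\xi)\ge c(1-\xi)$ for all $\xi\in[0,1]$, hence $\sum_{k=N-m}^{N-1}\betakN{k}\ge \frac{c}{N}\frac{m(m+1)}{2}\ge\frac{c\,m^2}{2N}$. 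Therefore $C''\ge \underline{\alpha}_{\tilde\ell}(\varepsilon)\,\frac{c\,m^2}{2N}$, which rearranges to $m\le \sqrt N/\alpha(\varepsilon)$ with $\alpha(\varepsilon):=\sqrt{c\,\underline{\alpha}_{\tilde\ell}(\varepsilon)/(2C'')}\in\Kinf$, giving $Q_\varepsilon^\beta(N,x)=N-m\ge N-\sqrt N/\alpha(\varepsilon)$. The main obstacle is this last step: in the undiscounted turnpike the discount has a uniform positive lower bound and the number of far points is bounded by a constant, whereas here the adversarial concentration of far points near $k=N$ forces the quadratic-in-$m$, linear-in-$N$ balance that produces the $\sqrt N$ term, and the linear decay rate of $\beta$ near $1$ is precisely what makes this bound finite.
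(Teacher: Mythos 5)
Your proposal is correct and takes essentially the same approach as the paper: a uniform bound on the rotated discounted cost $\tilde J_N^\beta(x,\uNx)$ obtained via summation by parts and the reachability/controllability assumptions, the extraction of a constant $c>0$ with $\beta(\xi)\geq c(1-\xi)$ from $\limsup_{\xi\to 1}\beta'(\xi)<0$ and monotonicity, and the resulting counting bound $m^2 \lesssim N$. The only difference is presentational: the paper delegates the value-function bound and the worst-case-at-the-end counting argument to Lemmas~22 and~23 of \cite{1-schwenkel23} (after reducing to the linear discount via $\tilde J_N^{\beta^{\mathrm{lin}}} \leq c^{-1}\tilde J_N^\beta$), whereas you re-derive both steps inline.
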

\begin{proof}
	First, let us establish an upper bound on the rotated cost functional $\tilde J_N^\beta (x,\uNx)=\sum_{k=0}^{N-1} \beta \left(\frac kN \right)\tilde \ell\big(\xNx(k),\uNx(k)\big)$ starting with
	\begin{align*}
		\tilde{J}_N^\beta(x,\uNx) &\overset{\eqref{eq:s-diss}}{=} V_N^\beta(x) - \sum_{k=0}^{N-1} \betakN{k} \ell^\star + \lambda(x)- \sum_{k=1}^N \left(\betakN{k-1}-\betakN{k}\right)\lambda(\xNx(k)).
	\end{align*}
	Due to Assumptions~\ref{ass:s-diss},~\ref{ass:cac}, and~\ref{ass:df} we have
	\begin{align*}
		-\sum_{k=1}^N \left(\betakN{k-1}-\betakN{k}\right)\lambda(\xNx(k)) &\leq \sum_{k=1}^N \left(\betakN{k-1}-\betakN{k}\right) \bar{\lambda}=\underbrace{\left(\beta(0)-\beta(1)\right)}_{=1}\bar \lambda.
	\end{align*}
	Due to Assumptions~\ref{ass:lctrb} and~\ref{ass:ftr} we can reach the optimal orbit $\PiXstar$ from $x$ in at most $M_3=M_1+M_2$ steps.
	Therefore, Lemma~22 from~\cite{1-schwenkel23} yields
	\begin{align*}
		V_N^\beta(x) - \sum_{k=0}^{N-1} \betakN{k} \ell^\star + \lambda(x)+\bar \lambda \leq M_3(\bar \ell-\ell^\star)+2\bar \lambda+\ell^\star p^\star=:C.
	\end{align*}
	Although this Lemma was only stated for linear discounts, it is straightforward to see that the proof is valid for general $\beta$ satisfying Assumption~\ref{ass:df} as only the fact that $\beta$ is non-increasing and $\beta(1)=0$, $\beta(0)=1$ is used.
	Furthermore, due to Assumption~\ref{ass:df} we know that $\mathrm{lim\,sup}_{\xi\to 1} \beta'(\xi)<0$ and $\beta$ is non-incresing, hence there must exist a constant $c>0$ such that $\beta$ is lower bounded by the scaled linear discount $\beta(\xi)\geq c \beta^{\text{lin}}(\xi)$ for all $\xi\in[0,1]$.
	Hence, as $\tilde \ell$ is non-negative, we obtain $\tilde J_{N}^{\beta^\text{lin}}(x,\uNx) \leq c^{-1} \tilde J_{N}^{\beta}(x,\uNx) \leq c^{-1} C$.
	Therefore, we can apply Lemma~23 from~\cite{1-schwenkel23}, which guarantees that the weak turnpike property holds with $\alpha(\varepsilon)= \sqrt{\frac{c}{2C} \underline\alpha_{\tilde \ell} (\varepsilon)}$, $\alpha\in\Kinf$. \hfill \qed
\end{proof}

\begin{lemma}[Average over turnpike trajectories]\label{lem:average}
	Let Assumptions~\ref{ass:orbit}--\ref{ass:df} hold and let $g:\X\times \U\to \R$ be a continuous function and let $g^\star :=\frac 1 {p^\star} \sum_{k=0}^{p^\star -1} g(\Pistar(k))$, then there exists $\delta_1\in\L$ such that for all $x\in\X$ 
	\begin{align}\label{eq:average}
		\left| g^\star - \sum_{k=1}^{N-1} \left(\betakN{k-1}-\betakN{k}\right) g\left(\xNx(k), \uNx(k)\right) \right| \leq \delta_1(N).
	\end{align}
\end{lemma}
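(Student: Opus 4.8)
The plan is to prove that the discount-weighted time-average of $g$ along the optimal trajectory converges to the orbit-average $g^\star$, using the weak turnpike property (Lemma~\ref{thm:wtp}) to control excursions and the regularity of $\beta$ to average out the fast periodic oscillation of $g$ along the turnpike. Write $G(k):=g\big(\xNx(k),\uNx(k)\big)$ and $w_k:=\betakN{k-1}-\betakN{k}$, and set $\bar g:=\sup_{(x,u)\in\X\times\U}|g(x,u)|<\infty$ (Assumption~\ref{ass:cac}). Two elementary facts about the weights drive everything: since $\beta$ is non-increasing and $L_\beta$-Lipschitz (Assumption~\ref{ass:df}) we have $0\le w_k\le L_\beta/N$, and by telescoping $\sum_{k=1}^{N-1}w_k=\beta(0)-\betakN{N-1}=1-\betakN{N-1}$ with $\betakN{N-1}=\betakN{N-1}-\beta(1)\le L_\beta/N$. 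First I would split
\begin{align*}
 g^\star-\sum_{k=1}^{N-1}w_k\,G(k)=g^\star\Big(1-\sum_{k=1}^{N-1}w_k\Big)+\sum_{k=1}^{N-1}w_k\big(g^\star-G(k)\big),
\end{align*}
whose first summand is bounded by $\bar g\,L_\beta/N\in\L$; it then remains to bound $(II):=\sum_{k=1}^{N-1}w_k\big(g^\star-G(k)\big)$ uniformly in $x$.

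For $(II)$ I would fix a resolution $\varepsilon>0$ and separate the indices into those on the turnpike ($\|(\xNx(k),\uNx(k))\|_{\Pi^\star}\le\varepsilon$) and those off it. By Lemma~\ref{thm:wtp} there are at most $\sqrt N/\alpha(\varepsilon)$ off-turnpike indices; as each weight is $\le L_\beta/N$ and each summand is $\le 2\bar g$, their contribution is at most $2\bar g L_\beta/(\alpha(\varepsilon)\sqrt N)$. On the turnpike, uniform continuity of $g$ on the compact set $\X\times\U$ lets me replace $G(k)$ by the value $g(\Pistar(\phi_k))$ at the nearest orbit point (of phase $\phi_k$), incurring an error $\le\omega_g(\varepsilon)\sum_k w_k\le\omega_g(\varepsilon)$ with $\omega_g$ the modulus of continuity of $g$. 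For $\varepsilon$ small the phase $\phi_k$ is well defined (minimality of $\Pistar$) and advances by one along every run of consecutive on-turnpike indices (continuity of $f$). To exploit this, I would partition $\I_{[1,N-1]}$ into consecutive blocks of length $p^\star$: on a block that is entirely on the turnpike and free of discontinuities of $\beta'$, the phases make one full cyclic tour so that $\sum_{k\in\text{block}}\big(g^\star-g(\Pistar(\phi_k))\big)=0$, and subtracting a common weight reduces the block's contribution to $\sum_k(w_k-w_{k_0})\big(g^\star-g(\Pistar(\phi_k))\big)$. Here $|w_k-w_{k_0}|\le\tfrac1N\omega_{\beta'}(p^\star/N)$ because each $w_j$ is the integral of $-\beta'$ over an interval of length $1/N$ and these intervals lie within $p^\star/N$ of one another, so summing over all $\le N/p^\star$ good blocks yields a bound $2\bar g\,\omega_{\beta'}(p^\star/N)$ that vanishes as $N\to\infty$ by (piecewise) continuity of $\beta'$. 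The remaining blocks — those containing an off-turnpike index or a kink of $\beta'$ — number at most $\sqrt N/\alpha(\varepsilon)+O(1)$ and each contributes at most $2\bar g p^\star L_\beta/N$, hence $O\big(1/(\alpha(\varepsilon)\sqrt N)\big)$ in total.

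Collecting these estimates gives $|(II)|\le C_1/(\alpha(\varepsilon)\sqrt N)+C_2\,\omega_g(\varepsilon)+2\bar g\,\omega_{\beta'}(p^\star/N)$ for constants $C_1,C_2$ independent of $x$ and $N$. Finally I would let $\varepsilon=\varepsilon(N)\to0$ slowly enough that $\alpha(\varepsilon(N))^{-1}N^{-1/2}\to0$ (possible since $\varepsilon(N)$ may tend to zero arbitrarily slowly), so that every term vanishes and $(II)\le\delta''(N)$ for some $\delta''\in\L$; together with the first summand this produces the claimed $\delta_1\in\L$. The step I expect to be the main obstacle is the turnpike bookkeeping underlying the block argument: verifying that for small $\varepsilon$ the phase is well defined and advances by exactly one along on-turnpike runs so that the orbit values cancel over each period, and confirming that the nearly-constant behaviour of the weights across a block — which is precisely where the piecewise-$C^1$ assumption on $\beta$ enters through the uniform continuity of $\beta'$ — survives the presence of the finitely many kinks and of the off-turnpike excursions.
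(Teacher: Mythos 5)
Your proposal is correct, and it rests on the same pillars as the paper's proof: the weak turnpike property of Lemma~\ref{thm:wtp} with a slowly shrinking neighborhood, replacement of $g\big(\xNx(k),\uNx(k)\big)$ by orbit values at the price of a modulus-of-continuity error in $g$, a partition of $\I_{[1,N-1]}$ into blocks of length $p^\star$ on which the phase advances by one (your minimality-plus-continuity-of-$f$ sketch for this ``main obstacle'' is exactly the content of \cite[Lemma~15]{3-mueller16}, which the paper simply cites), and a count of bad blocks multiplied by the per-weight bound $L_\beta/N$. The genuine difference is how the discount weights are averaged over the good blocks. The paper regroups the weighted sum \emph{by phase}: for each $p\in\I_{[0,p^\star-1]}$ it collects the weights $\betakN{k_{p,j}-1}-\betakN{k_{p,j}}$ across all good blocks $j$ and shows, via the mean value theorem on the differentiable pieces, that this sum is squeezed between Darboux-type sums of $-\beta'/p^\star$, which converge to $-\frac{1}{p^\star}\int_0^1\beta'(\xi)\,\mathrm{d}\xi=\frac{1}{p^\star}$, yielding an existence-only error $\delta_6\in\L$. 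You instead keep the sum organized \emph{by block}, exploit the exact cancellation $\sum_{k\in\text{block}}\big(g^\star-g(\Pistar(\phi_k))\big)=0$ over one full cyclic tour, and bound what remains by the within-block variation of the weights, $|w_k-w_{k_0}|\le\frac{1}{N}\omega_{\beta'}(p^\star/N)$. Both routes consume the piecewise-$C^1$ assumption on $\beta$, but through different analytic facts (Riemann-sum convergence versus uniform continuity of $\beta'$ on each piece), and yours has the advantage of producing an explicit rate $\omega_{\beta'}(p^\star/N)$ where the paper only asserts existence of $\delta_6\in\L$. Three routine points should still be written out to make the argument complete and match the paper's rigor: (i) the phase-advancement/cancellation step requires $\varepsilon(N)$ below the threshold $\bar\varepsilon$ of \cite[Lemma~15]{3-mueller16}, so small $N$ must be handled separately by redefining $\delta_1$ there, exactly as the paper does for $N<N_0$; (ii) the trailing partial block of length $<p^\star$ must be counted among the bad blocks (an extra $O(1/N)$ term); and (iii) after the diagonal choice $\varepsilon=\varepsilon(N)$ your bound need not be non-increasing in $N$, so pass to its monotone envelope to obtain $\delta_1\in\L$.
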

\begin{proof}
	Due to Lemma~\ref{thm:wtp} the weak turnpike property~\eqref{eq:wtp-Q} holds, which yields with $\varepsilon=\sigma(N):= \alpha^{-1}(\sqrt[-4]{N})$, $\sigma \in \L$ that for all $N\in \N$ and $x\in\X$ we have
	\begin{equation}
		Q_{\sigma(N)}^\beta(N,x) \geq N-\sqrt[4]{N^3}.
		\label{eq:QsN}
	\end{equation}
	Hence, there are at most $\sqrt[4]{N^3}$ points of $(\xNx, \uNx)$ outside the $\sigma(N)$-neighborhood of $\Pistar$.
	Denote the left hand side of~\eqref{eq:average} by $\Sigma_1$ and let us split the sum in $\Sigma_1$ over $k\in \I_{[1,N-1]}$ into pieces $k\in\mathcal I_j=\I_{[j p^\star+1,(j+1)p^\star]}$ of length $p^\star$ for $j \in\I_{[0,\lfloor (N-1)/p^\star \rfloor-1]}$ and let $\mathcal I_{\lfloor (N-1)/p^\star \rfloor}=\I_{[\lfloor (N-1)/p^\star \rfloor p^\star, N-1]}$ contain the remainder. 
	Let $\mathcal J$ be the set of indices $j$ for which the piece $\mathcal I_j$ is not fully inside the $\sigma(N)$-neighborhood of $\Pistar$ and of the remainder piece, hence $\#\mathcal J \leq \sqrt[4]{N^3}+1$.
	Now, using Lipschitz continuity $\left|\betakN{k-1}-\betakN{k}\right|\leq \frac{L_\beta}{N}$, we bound all pieces in $\mathcal J$ by
	\begin{align*}
		\bigg|\sum_{j\in\mathcal J} \sum_{k\in \mathcal I_j} \left(\betakN{k-1}-\betakN{k}\right) g\left(\xNx(k), \uNx(k)\right) \bigg| \leq \left( \sqrt[4]{N^3}+1\right) p^\star \bar g \frac{L_\beta}{N} =:\delta_4(N)
	\end{align*}
	where continuity of $g$ and compactness of $\X \times \U$ imply $\bar g = \max_{(x,u)\in\X\times \U} |g(x,u)|$ exists and is finite. 
	Note that $\delta_4\in\L$.
	Further, with the same reasons there exists $\overline \alpha_g\in\Kinf$ that satisfies $|g(x,u) - g(\Pistar(k))|\leq \overline\alpha_g(\|(x,u)-\Pistar(k)\|)$ for all $k\in\I_{[0,p^\star-1]}$ and all $(x,u)\in\X\times \U$.
	Next, define the complement of $\mathcal J$ by $\bar {\mathcal J} := \I_{[0,\lfloor (N-1)/p^\star \rfloor]} \setminus \mathcal J$, and compute
	\begin{align*}
		\Sigma_1 & \leq \bigg|g^\star - \sum_{j\in\bar{\mathcal J}} \sum_{k\in \mathcal I_j} \left(\betakN{k-1}-\betakN{k}\right) g\big(\xNx(k), \uNx(k)\big) \bigg|+\delta_4(N)\\
		&\leq 
		\bigg|g^\star - \sum_{j\in\bar{\mathcal J}} \sum_{k\in \mathcal I_j} \left(\betakN{k-1}-\betakN{k}\right) g\big(\Pistar(i_k)\big) \bigg| + \underbrace{L_\beta \overline\alpha_g\big(\sigma(N)\big)+\delta_4(N)}_{=:\delta_5(N)}
	\end{align*}
	where $i_k\in\I_{[0,p^\star -1]}$ is such that $\|\Pistar(i_k)-\big(\xNx(k),\uNx(k)\big)\|\leq \sigma(N)$ and $\delta_5\in\L$.
	First, we consider $N\geq N_0$ and choose $N_0\in\N$ large enough such that $\sigma(N_0)\leq \bar \varepsilon$ with $\bar \varepsilon$ from~\cite[Lemma~15]{3-mueller16}.
	Then this Lemma guarantees that the points in $\mathcal I_j$ follow the orbit, since they are inside the $\sigma(N)$-neighborhood of $\Pistar$, i.e., $i_{k+1}=[i_k +1]_{p^\star}$ for all $k\in\mathcal I_j$. 
	Hence, the $p^\star$ elements of $\{i_k|k\in\mathcal I_j\}$ are exactly $[0,p^\star-1]$ and for each $p\in[0,p^\star-1]$ and $j\in\bar{\mathcal J}$ there exists a $k_{p,j}\in\mathcal I_j$ such that $i_{k_{p,j}}=p$.
	Therefore we can continue as follows 
	\begin{align}\nonumber
		\Sigma_1 &\leq  \bigg|\sum_{p=0}^{p^\star-1} g\big(\Pistar(p)\big) \bigg(\frac 1{p^\star} - \sum_{j\in\bar{\mathcal J}} \left(\betakN{k_{p,j}-1}-\betakN{k_{p,j}}\right) \bigg) \bigg| + \delta_5(N)\\
		&\leq  \sum_{p=0}^{p^\star-1} \left|g\big(\Pistar(p)\big)\right| \bigg|\frac 1{p^\star} - \sum_{j\in\bar{\mathcal J}} \left(\betakN{k_{p,j}-1}-\betakN{k_{p,j}}\right) \bigg|  + \delta_5(N). \label{eq:average_intermediate}
	\end{align}
	Since $\beta$ is piecewise continuously differentiable, we can use the mean value theorem (on all differentiable pieces separately and taking the infimum) to obtain
	\begin{align*}
		 \sum_{j\in\bar{\mathcal J}} \left(\betakN{k_{p,j}-1}-\betakN{k_{p,j}}\right) \leq \sum_{j\in\bar{\mathcal J}}  -\frac 1 N \inf_{\xi \in \left[\frac{k_{p,j}-1}{N},\frac{k_{p,j}}{N}\right]} \beta'(\xi)
	\end{align*}
	where in the $\inf$ we ignore the finitely many values $\xi$ in the interval at which $\beta'$ does not exist.
	Further, since $ \left[\frac{k_{p,j}-1}{N},\frac{k_{p,j}}{N}\right]\subseteq \left[\xi_j,\xi_{j+1}\right]$ with $\xi_j:=\frac{j p^\star}{N}$ for $j\in\I_{[0,\lfloor (N-1)/p^\star \rfloor]}$ and $\xi_{\lfloor (N-1)/p^\star \rfloor+1}:= 1$, we can continue
	\begin{align*}
		\sum_{j\in\bar{\mathcal J}}  -\frac 1 N \inf_{\xi \in \left[\frac{k_{p,j}-1}{N},\frac{k_{p,j}}{N}\right]} \beta'(\xi) \leq -\sum_{j=0}^{\lfloor (N-1)/p^\star \rfloor} \frac{\xi_{j+1}-\xi_j}{p^\star} \inf_{\xi \in \left[\xi_j,\xi_{j+1}\right]} \beta'(\xi)
	\end{align*}
	where we used that $\beta'(\xi) \leq 0$ as $\beta$ is non-increasing.
	The partition of the interval $[0,1]$ defined by $\xi_j$ satisfies $\xi_{j+1}-\xi_j\leq \frac {p^\star} N \to 0$ as $N \to \infty$ and thus the sum converges to the Riemann integral of $\beta'$ as $N\to \infty$, i.e., there exists $\delta_6\in\L$ such that
	\begin{align*} 
		-\sum_{j=0}^{\lfloor (N-1)/p^\star \rfloor} \frac{\xi_{j+1}-\xi_j}{p^\star} \inf_{\xi \in \left[\xi_j,\xi_{j+1}\right]} \beta'(\xi) &\leq -\frac 1 {p^\star} \int_0^1 \beta'(\xi)\,\mathrm{d} \xi + \delta_6(N) =\frac{1}{p^\star}+\delta_6(N)
	\end{align*}
	where we used $-\int_0^1 \beta'(\xi)\,\mathrm{d} \xi=\beta(0)-\beta(1)=1$. With similar arguments we can construct a lower bound and obtain
	\begin{align*}
		\frac{1}{p^\star}-\delta_6(N) \leq \sum_{j\in\bar{\mathcal J}} \left(\betakN{k_{p,j}-1}-\betakN{k_{p,j}}\right) \leq \frac{1}{p^\star}+\delta_6(N).
	\end{align*}
	Using this in~\eqref{eq:average_intermediate} and defining $\delta_1(N) := \delta_5(N)+\sum_{p=0}^{p^\star-1} \left|g(\Pistar(p))\right| (\delta_6(N))$, which satisfies $\delta_1\in\L$, we obtain the desired result for $N\geq N_0$.
	For $N< N_0$ we know that $\Sigma_1\leq |g^\star| + \bar g$ and thus by redefining $\delta_1 (N) := \max\{\delta_1(N_0), |g^\star| + \bar g\}$ for $N<N_0$ the result follows for all $N\in\N$ as still $\delta_1\in\L$.   \hfill \qed
	\end{proof}
	\begin{proof}[Proof of Lemma~\ref{lem:almost_optimal_candidate}]
	To begin with, we construct a candidate solution for $V_{N}^\beta\left(\xNx(1)\right)$ based on $\uNx$.
	Therefore, we use the weak turnpike property and $\sigma(N) = \alpha^{-1}(\sqrt[-4]{N})$ to obtain~\eqref{eq:QsN}.
	Let $P\in \I_{[0,N-1]}$ be the largest time point where $(\xNx,\uNx)$ is in the $\sigma(N)$-neighborhood of $\Pistar$, then \eqref{eq:QsN} implies $P\geq N-\sqrt[4]{N^3}-1$.
	First, we consider only $N\geq N_0$ and choose $N_0\in\N $ large enough such that $P\geq 1$ and $\sigma(N)\leq \kappa$ for all $N\geq N_0$ with $\kappa$ from Assumption~\ref{ass:lctrb}.
	Hence, at $P$ the point $\xNx(P)$ is in the local controllability neighborhood of $\PiXstar$ such that we know that there exists $u_1\in \U^{M_1}(\xNx(P))$ with $x_{u_1}(M_1,\xNx(P))=\xNx(P)$.
	Thus, for $V_{N}^\beta(\xNx(1))$ we can construct a candidate solution $\bar u \in \U^{N}(\xNx(1))$ and the resulting state trajectory $x_{\bar u}$  based on the solution for $V_{N}^\beta(x)$ as follows
	\begin{align}
		\bar{u}(k) &= \begin{cases}
			\uNx(k+1) &\text{for } k \in \mathbb{I}_{[0,\,P-2]} \\
			u_1(k-P+1) &\text{for } k \in \mathbb{I}_{[P-1,\,P+M_1-2]} \\
			\uNx(k-M_1+1) &\text{for } k \in \mathbb{I}_{[P+M_1-1,\,N-1]}.
		\end{cases}
		\label{eq:cs-u}\\
		x_{\bar u}(k,\xNx(1)) &= \begin{cases}
			\xNx(k+1) &\text{for } k \in \mathbb{I}_{[0,P-2]} \\
			x_{u_1}\big(k-P+1,\xNx(P)\big) &\text{for } k \in \mathbb{I}_{[P-1,P+M_1-2]} \\
			\xNx(k-M_1+1) &\text{for } k \in \mathbb{I}_{[P+M_1-1,N]}.
		\end{cases}
		\label{eq:cs-x}
	\end{align}
	With this candidate solution we can upper bound $V_{N}^\beta \big(\xNx(1)\big) \leq J_{N}^\beta \big(\xNx(1),\bar u\big)=\Sigma_2+\Sigma_3+\Sigma_4$ where $\Sigma_2$, $\Sigma_3$, $\Sigma_4$ sum up the three pieces in~\eqref{eq:cs-u} and~\eqref{eq:cs-x}
	\begin{align*}
		\Sigma_2 &:= \sum_{k=0}^{P-2} \betakN{k} \ell\big(\xNx(k+1),  \uNx(k+1)\big)\\
		\Sigma_3 &:= \sum_{k=P-1}^{P+M_1-2} \betakN{k} \ell\big(x_{u_1}\left(k-P+1,\xNx(P)\right), u_1(k-P+1)\big)\\
		\Sigma_4 &:= \sum_{k=P-1}^{N-M_1-1} \betakN{k+M_1} \ell\big(\xNx(k+1),  \uNx(k+1)\big).
	\end{align*}
	We start to bound $\Sigma_3$ as follows
	\begin{align*}
	\Sigma_3 \leq M_1 \bar\ell \betakN{P-1}\leq M_1 \bar\ell\betakN{N-\sqrt[4]{N^3}-2} = M_1 \bar\ell \beta \left({\textstyle 1 - \frac{1}{\sqrt[4]{N}}-\frac 2N }\right)=:\delta_7(N)
	\end{align*}
	where $\delta_7\in\L$ due to $\beta(1)=0$ and $\beta$ being continuous.
	Further, as $\beta$ is non-increasing and $\beta$, $\ell$ are non-negative, we can bound $\Sigma_2+\Sigma_4$ as follows
	\begin{align*}
		\Sigma_2+\Sigma_4 &\leq \sum_{k=1}^{N-1} \betakN{k-1} \ell\left(\xNx(k), \uNx(k)\right) \\
		& =V_N^\beta(x)-\ell(x,\uNx(0)) + \sum_{k=1}^{N-1} \left(\betakN{k-1}-\betakN{k}\right) \ell\left(\xNx(k), \uNx(k)\right)\\
		&\refeq{\text{Lemma~\ref{lem:average}}}{\leq}\quad V_N^\beta(x)-\ell(x,\uNx(0)) + \ell^\star + \delta_1(N)
	\end{align*}
	for some $\delta_1\in\L$.
	Define $\delta_2(N):=\delta_1(N)+\delta_7(N)$ then the desired inequality~\eqref{eq:almost_optimal_candidate} follows for $N\geq N_0$.
	For $N< N_0$, we know that~\eqref{eq:almost_optimal_candidate} holds with $\delta_2 (N) := \max\{(N_0+1)\bar\ell,\delta_2(N_0)\}$ and since this guarantees $\delta_2\in\L$, the desired result follows for all $N\in\N$.  \hfill \qed
	\end{proof}

	\begin{proof}[Proof of Lemma~\ref{lem:diff_value}]
		This Lemma is the generalization of \cite[Lemma~29]{1-schwenkel23} from linear discounts to discounts that satisfy Assumption~\ref{ass:df}. 
		Also the proof follows the same line of arguments.
		First, note that if we replace the stage cost $\ell$ with the rotated stage cost $\tilde \ell$ from~\eqref{eq:s-diss}, then the Assumptions~\ref{ass:s-diss}--\ref{ass:df} still hold and thus Lemma~\ref{thm:wtp} applies and solutions of $\tilde V_{N}^\beta $ satisfy the weak turnpike property as well.
		The weak turnpike property \eqref{eq:wtp-Q} for $V_N^\beta (x)$, $V_N^\beta (y)$, $\tilde V_N^\beta (x)$, and $\tilde V_N^\beta (y)$ guarantees that there are at least $N-\frac{4\sqrt{N}}{\alpha(\varepsilon)}$ time points $k\in \I_{[0,N-1]}$ where these four trajectories are $\varepsilon$ close to $\Pistar$.
		Choosing $\varepsilon=\sigma_2(N):=\alpha^{-1} (4\sqrt[-4]{N})$ yields that there are at least $N-\sqrt[4]{N^3}$ such points.
		We denote the largest one by $P\geq N-\sqrt[4]{N^3}-1$ and choose $N_0$ large enough such that $P\geq 1$ and $\sigma_2(N)\leq \kappa$ for $N\geq N_0$.
		First, consider only $N\geq N_0$, then we use Assumption~\ref{ass:lctrb} to construct a candidate solution $\bar u\in\U^{N}(y)$ for $\tilde V_N^\beta (y)$ which consists of the first $P$ steps of the optimal solution of $V_N^\beta(y)$ and goes afterwards in $M_1$ steps to $\Pistar$ using the local controllability and then remains there. 
		Using this candidate solution, we can upper bound
		\begin{align}\label{eq:sim_cost1}
			\tilde V_N^\beta (y) \leq \tilde J_{N}^{\beta} (y, \bar  u) \leq \sum_{k=0}^{P-1} \betakN{k} \tilde \ell\left(\xNx[y](k),\uNx[y](k)\right) + \delta_8(N)
		\end{align}
		where we used $\tilde \ell ( \Pi^\star (k))=0$ (see~\cite[Lemma~27]{1-schwenkel23}) and~\eqref{eq:A-lC} from Assumption~\ref{ass:lctrb} to compute $\delta_8(N):=M_1\overline\alpha_{\tilde \ell}(\alpha_c (\sigma_2(N)))$, which is $\delta_8\in\mathcal L$.
		Furthermore, non-negativity of $\tilde \ell$ implies
		\begin{align}\label{eq:sim_cost2}
			\tilde V_N^\beta (x) \geq \sum_{k=0}^{P-1} \betakN{k} \tilde \ell\left(\xNxrot(k),\uNxrot(k)\right).
		\end{align}
		Therefore, using the definition of the rotated cost~\eqref{eq:s-diss}, we have
		\begin{align}\nonumber
			\tilde V_N^\beta (y)-\tilde V_N^\beta (x) &\leq \delta_8(N)+ \underbrace{\sum_{k=0}^{P-1} \betakN{k} \left(\ell\left(\xNx[y](k),\uNx[y](k)\right)-  \ell\left(\xNxrot(k),\uNxrot(k)\right) \right)}_{=:\Sigma_5}  \\\nonumber
			&\quad +\lambda(y)-\lambda(x)+ \underbrace{\sum_{k=1}^{P-1} \left(\betakN{k-1}-\betakN{k}\right)\left(\lambda(\xNxrot(k))- \lambda(\xNx[y](k))\right)}_{=:\Sigma_6}\\\label{eq:sim_cost3}
			&\quad +\betakN{P-1}\left(\lambda(\xNxrot(P))-\lambda(\xNx[y](P))\right).
		\end{align}
		Let us investigate these terms further starting with
		\begin{align*}
			\betakN{P-1}\left(\lambda(\xNxrot(P))-\lambda(\xNx[y](P))\right)\leq 2\bar\lambda \betakN{P-1} \leq 2\bar\lambda  \beta \left({\textstyle 1 - \frac{1}{\sqrt[4]{N}}-\frac 2N }\right)
		\end{align*}
		and define $\delta_{9}(N)=2\bar\lambda  \beta \left({\textstyle 1 - \frac{1}{\sqrt[4]{N}}-\frac 2N }\right)$ and note that $\delta_{9}\in\L$ as $\beta(1)=0$ and $\beta$ is continuous.
		Now let us turn to $\Sigma_6$ and use Lemma~\ref{lem:average} to obtain that there exists $\delta_{10}\in \L$ with $\Sigma_6\leq \delta_{10}(N)$, where the assumptions are satisfied since $\lambda$ is continuous and the weak turnpike property holds not only for $\xNx[y],\uNx[y]$ but also for $\xNxrot,\uNxrot$. 
		The fact that the sum in $\Sigma_6$ goes only until $P-1$ and not until $N-1$ just implies that we get another $\frac{L_\beta}{N} (\sqrt[4]{N^3}+1) 2 \bar \lambda\in\mathcal L$ on top of $\delta_{10}$, as $N-P\leq \sqrt[4]{N^3}+1$.
		Next, we consider the part $\Sigma_5$ and construct a candidate solution $\bar u\in\U^{N}(x)$ for $V_N^\beta (x)$ which consists of the first $P$ steps of the optimal solution $\uNxrot$ of $\tilde V_N^\beta(x)$ and afterwards uses the local controllability to go to $x_{\uNx[y]}(P,y)$ in $K\in\I_{[M_1, M_1+p^\star -1]}$ steps\footnote{Since $\xNxrot(P)$ and $\xNx[y](P)$ are both in the $\sigma_2(N)$ neighborhood of $\PiXstar$, but not necessarily in the $\sigma(N)$-neighborhood of the same $\PiXstar(j)$, we need at most $p^\star-1$ additional steps to have the same phase.} and then applies $\uNx[y](k)$ starting at $k=P$ to remain on $V_N^\beta (y)$. 
		Hence,
		\begin{align*}
			\Sigma_5 &= \sum_{k=0}^{P-1} \betakN{k} \ell\left(\xNx[y](k),\uNx[y](k)\right)  - J_N^\beta (x,\bar u) + \sum_{k=P}^{P+K-1} \betakN{k} \ell\left( x_{\bar u}(k,x),\bar u(k)\right) \\&\qquad + \sum_{k=P}^{N-K-1} \betakN{k+K} \ell\left(\xNx[y](k),\uNx[y](k)\right) \\
			&\leq V_{N}^\beta (y)-V_N^\beta(x) + \betakN{P}K\bar \ell,
		\end{align*}
		where the last inequality holds as $\ell$ and $\beta$ are non-negative and $\beta$ is non-increasing.
		As $P\geq N-\sqrt[4]{N^3}-1$, we know that $\betakN{P}K\bar \ell \leq \betakN{N-\sqrt[4]{N^3}-1}K\bar \ell=:\delta_{12}(N)$ where $\delta_{12}\in\L$ as $\beta$ is continuous and $\beta(1)=0$.
		Finally, define $\delta_3(N) := \delta_9(N)+\delta_{9}(N)+\delta_{10}(N)+\delta_{12}(N)$ for $N\geq N_0$ and plug the bounds on $\Sigma_5$ and $\Sigma_6$ into~\eqref{eq:sim_cost3} to obtain the desired result for $N\geq N_0$. As~\eqref{eq:Lemma29} holds with $\delta_3(N):=\max\{\delta_3(N_0), N_0(2\bar \ell + 4\bar \lambda)+2\bar\lambda\}$ for $N<N_0$, which guarantees $\delta_3\in\L$, the desired result follows for all $N\in\N$.   \hfill \qed
	\end{proof}


\begin{thebibliography}{6}


\bibitem {15-amrit11}
R. Amrit, J.B. Rawlings, D. Angeli: \textit{Economic optimization using model predictive control with a terminal cost}.
Annual Reviews in Control~35, pp.178-186 (2011). \href{https://doi.org/10.1016/j.arcontrol.2011.10.011}{doi: 10.1016/j.arcontrol.2011.10.011}

\bibitem {8-angeli11}
D. Angeli, R. Amrit, J.B. Rawlings: \emph{On average performance and stability of economic model predictive control}.
IEEE Trans. Automat. Control~57, pp.~1615-1626 (2011). \href{https://doi.org/10.1109/TAC.2011.2179349}{doi: 10.1109/TAC.2011.2179349}

\bibitem{Dong2018}
Z. Dong, D. Angeli: \emph{Analysis of economic model predictive control with terminal penalty functions on generalized optimal regimes of operation}. Int. J. Robust and Nonlinear Control~28(16), pp.~4790-4815 (2018). \href{https://doi.org/10.1002/rnc.4283}{doi: 10.1002/rnc.4283}

\bibitem {11-ellis14}
M. Ellis, H. Durand, P.D. Christofides: \emph{A tutorial review of economic model predictive control methods}.
J. Process Control~24, pp.~1156-1178 (2014). \href{https://doi.org/10.1016/j.jprocont.2014.03.010}{doi: 10.1016/j.jprocont.2014.03.010}

\bibitem {25-faulwasser18}
T. Faulwasser, L. Grüne, M.A. Müller: \emph{Economic nonlinear model predictive control.}
Foundations and Trends in Systems and Control,~5, pp.~1-98 (2018). \href{https://doi.org/10.1561/2600000014}{doi: 10.1561/2600000014}

\bibitem{Gaitsgory2018} 
V. Gaitsgory, L. Grüne, M. Höger, C.M. Kellett, S.R. Weller: \emph{Stabilization of strictly dissipative discrete time systems with discounted optimal control}.
Automatica~93, pp.~311-320, (2018). \href{https://doi.org/10.1016/j.automatica.2018.03.076}{doi: 10.1016/j.automatica.2018.03.076}

\bibitem {6-gruene13}
L. Grüne.: \emph{Economic receding horizon control without terminal constraints}.
Automatica~49, pp.~725-734 (2013). \href{https://doi.org/10.1016/j.automatica.2012.12.003}{doi: 10.1016/j.automatica.2012.12.003}

\bibitem {5-gruene14}
L. Grüne, M. Stieler: \emph{Asymptotic stability and transient optimality of economic MPC without terminal conditions}.
Journal Process Control~24, pp.1187-1196 (2014). \href{https://doi.org/10.1016/j.jprocont.2014.05.003}{doi: 10.1016/j.jprocont.2014.05.003}

\bibitem {18-gruene17}
L. Grüne, J. Pannek: \emph{Nonlinear model predictive control.}
Springer (2017). \href{https://doi.org/10.1007/978-3-319-46024-6}{doi: 10.1007/978-3-319-46024-6}

\bibitem {2-koehler18}
J. Köhler, M.A. Müller, F. Allgöwer: \emph{On periodic dissipativity notions in economic model predictive control}.
IEEE Control Systems Letters~2, pp.501--506 (2018). \href{https://doi.org/10.1109/LCSYS.2018.2842426}{doi: 10.1109/LCSYS.2018.2842426}

\bibitem {24-koehler24}
J. Köhler, M.A. Müller, F. Allgöwer: \emph{Analysis and design of model predictive control frameworks for dynamic operation - An overview}.
Annual Reviews in Control 57 (2024). \href{https://doi.org/10.1016/j.arcontrol.2023.100929}{doi: 10.1016/j.arcontrol.2023.100929}

\bibitem {22-martin19}
T. Martin, P.N. Köhler, F. Allgöwer: \emph{Dissipativity and economic model predictive control for optimal set operation}.
Proc. American Control Conf. (ACC), pp.~1020-1026 (2019). \href{https://doi.org/10.23919/ACC.2019.8814305}{doi: 10.23919/ACC.2019.8814305}

\bibitem {16-mueller15}
M.A. Müller, L. Grüne, F. Allgöwer: \emph{On the role of dissipativity in economic model predictive control}.
Proc. 5th IFAC Conf. Nonlinear Model Predictive Control (NMPC), pp.~110-116 (2015). \href{https://doi.org/10.1016/j.ifacol.2015.11.269}{doi: 10.1016/j.ifacol.2015.11.269}

\bibitem {3-mueller16}
M.A. Müller, L. Grüne: \emph{Economic model predictive control without terminal constraints for optimal periodic behavior}.
Automatica~70, pp.~128-139, (2016). \href{https://doi.org/10.1016/j.automatica.2016.03.024}{doi: 10.1016/j.automatica.2016.03.024}

\bibitem {Zanon22}
M. Zanon, S. Gros: \emph{A new dissipativity condition for asymptotic stability of discounted economic MPC}.
Automatica~141, pp.~110287, (2022). \href{https://doi.org/10.1016/j.automatica.2022.110287}{doi: 10.1016/j.automatica.2022.110287}

\bibitem {14-zanon16}
M. Zanon, L. Grüne, M. Diehl: \emph{Periodic optimal control, dissipativity and MPC}.
IEEE Trans. Automat. Control~62, pp.~2943-2949 (2016). \href{https://doi.org/10.1109/TAC.2016.2601881}{doi: 10.1109/TAC.2016.2601881}

\bibitem {1-schwenkel23}
L. Schwenkel, A. Hadorn, M.A. Müller, F. Allgöwer: \emph{Linearly discounted economic MPC without terminal conditions for periodic optimal operation}.
Automatica 159 (2024). \href{https://doi.org/10.1016/j.automatica.2023.111393}{doi: 10.1016/j.automatica.2023.111393}

\end{thebibliography}
\end{document}